\documentclass[12pt, a4paper, twosided, reqno]{amsart}
\usepackage{enumerate, cite}
\usepackage{hyperref}

\newtheorem{theorem}{Theorem}
\newtheorem{lemma}{Lemma}

\newtheorem{example}{Example}

\newtheorem*{thA}{Theorem A}

\allowdisplaybreaks
\author[N. Gahlian ]{Nidhi Gahlian }

\address{nidhi gahlian; department of mathematics, university of delhi, delhi-110007, india.}
\email{nidhigahlyan81@gmail.com}

\thanks {Research work of the author is supported by research fellowship from Department of Science and Technology(INSPIRE), New Delhi, India, IF-190674.}
\title[Study of Solutions ]{Study of Solutions of Certain type of Non-Linear Differential-Difference Equations}
\subjclass[2020]{30D35, 34M05, 39A05}
\keywords {Exponent of convergence, complex oscillation theory, lower order of a function, Nevanlinna theory}
\begin{document}
	\maketitle
	
	\begin{abstract}
		In this paper, we analyze the solutions of the following non-linear differential-difference equations
		$$f^{n}(z)+wf^{n-1}(z)f^{'}(z)+p(z)f(z+c)=p_{1}e^{\alpha_{1}z}+p_{2}e^{\alpha_{2}z}$$ and 
		$$f^{n}(z)f'(z)+q(z)e^{Q(z)}f(z+c)=p_{1}e^{\alpha_{1}z}+p_{2}e^{\alpha_{2} z},$$
		where $n$ is a positive integer, $w, p_{1}, p_{2}, \alpha_{1}$ $\&$  $\alpha_{2}$ are non-zero constants satisfying \qquad $\alpha_{1}$ $\neq$ $\alpha_{2}$,   $\frac{\alpha_1}{\alpha_2}\neq (n)^{\pm1}$, $q(z)$ is a non-vanishing polynomial and $Q(z)$ is a non-constant polynomial.
	\end{abstract}
	\section{\textbf{Introduction}}

	Many researchers have extensively explored the existence and solvability of entire and meromorphic solutions to nonlinear complex differential, difference, and differential–difference equations. A major development in this direction occurred in 1964, when Hayman \cite{hay} extended the Tumura–Clunie theorem, which led to a surge of interest in equations of Tumura–Clunie type, particularly those of the form $f^n(z) + Q_d(z,f) = g(z) $. Since then, these equations have been studied extensively, with many results obtained by considering different types of perturbations.
	
	As an illustrative example, Yang and Li \cite{yangcc} investigated the  differential equation
\begin{equation*}
	4f^3+3f''=-\sin 3z,
\end{equation*}
	and proved that it has exactly three non-constant entire solutions;
\begin{equation*}
	f_1(z) = \sin z,  \;	f_2(z) = -\frac{\sqrt{3}}{2}\cos z - \frac{1}{2} \sin z, \; \text{and} \; 
	f_3(z) = \frac{\sqrt{3}}{2}\cos z - \frac{1}{2}\sin z.
\end{equation*}
	Notably, the term $-\sin 3z $ can be written as
	\begin{equation*}
	-\sin 3z=\frac{1}{2\iota}(e^{-3\iota z}-e^{3\iota z}),
\end{equation*} 
	which expresses it as a linear combination of exponential functions. This observation naturally motivates the study of differential equations whose right-hand side consists of exponential terms.

In	particular, equations of the form
	\begin{equation*}
		f^n(z)+Q_d(z,f)=p_1e^{\alpha z}+p_2e^{-\alpha z}
	\end{equation*}
	 have attracted significant attention, and have been further  generalized  by replacing $f^n$ by $f^nf'$ or considering more general exponential terms and coefficients, including polynomials or rational functions, thus broadening the scope of investigation. One can see\cite{ chen, zh, yl, yangcc} and references therein. 
	It can be observed that the left-hand side of all these equations contains only one dominant term, either $f^n$ or $f^nf'$. Therefore, a natural and interesting direction for further investigation is to study equations that involve two dominant terms as in equation \eqref{oo}. Significant work has also been carried out in this direction (see \cite{ng, jh} and references therein).\\
	For instance, Li and Huang \cite{jh} studied the following equation
	  \begin{equation}\label{oo}
	 	f^n+\omega f^{n-1}f'+ f^k(z+c)=p_1e^{\alpha_1 z}+ p_2e^{\alpha_2 z},
	 \end{equation}
where  $\omega, p_1, p_2, \alpha_1, \alpha_2$ are non-zero constants and $k\geq1$ is an integer, with the restrictions  $\alpha_1 \neq \alpha_2$,  $\frac{\alpha_1}{\alpha_2}\neq n$ and $\frac{\alpha_2}{\alpha_1}\neq n$. Under these conditions, they showed that for $n\geq5$, equation \eqref{oo}  admits no transcendental solutions.
Motivated by this  literature, we now  establish the following result.
	\begin{theorem}\label{theoremA}
		Let $n\geq4$ be an integer, $p(z)$ be a non-vanishing polynomial and  $ p_1, p_2, \alpha_1, \alpha_2$ be non-zero constants such that $\alpha_1 \neq \alpha_2$ and $\frac{\alpha_1}{\alpha_2}\neq (n)^{\pm1}$. If the equation 
		\begin{equation}\label{1}
			f^n+\omega f^{n-1}f'+ p(z)f(z+c)=p_1e^{\alpha_1 z}+ p_2e^{\alpha_2 z},
		\end{equation}
admits a meromorphic solution $f(z)$ with $\rho_2 <1$, then $n=4$ and $f$ satisfies $\Bar{\lambda}(f)=\rho(f)$.
\end{theorem}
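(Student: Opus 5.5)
The plan has four steps: show $f$ is entire of order at most one, eliminate the exponentials by differentiation, split on whether the resulting differential polynomial vanishes (this yields $n=4$), and finally prove $\bar{\lambda}(f)=\rho(f)$ by a Borel-type argument.

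\textbf{Step 1: $f$ is entire with $\rho(f)\le 1$.} Suppose $f$ has a pole $z_0$ of order $k$. The left-hand side $f^{n}+\omega f^{n-1}f'$ then has a pole at $z_0$ of order $nk+1$ (or $nk$ after cancellation). Since the right-hand side is entire, this pole must be cancelled by $p(z)f(z+c)$, so $f$ has a pole of order at least $nk$ at $z_0+c$ (up to the finitely many zeros of $p$). Iterating along $z_0+jc$ gives poles of order at least $n^{j}k$ at points of modulus $O(j)$. Hence $n(r,f)\ge n^{Cr}$ for large $r$, forcing $\rho_2(f)\ge 1$, a contradiction. So $f$ is entire (the possibly finitely many exceptional poles can be absorbed into a rational factor).

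Next, write the equation as $f^{n-1}(f+\omega f')=E(z)-p(z)f(z+c)$ with $E=p_1e^{\alpha_1z}+p_2e^{\alpha_2z}$. The difference analogue of the logarithmic derivative lemma (valid since $\rho_2<1$), together with $T(r,f(z+c))=T(r,f)+S(r,f)$, gives
\[
nT(r,f)\le T(r,f)+O(r)+S(r,f).
\]
Hence $f$ is transcendental of order $\rho(f)\le 1$, and all error terms below are genuinely small.

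\textbf{Step 2: elimination of the exponentials.} Let $L(z)$ denote the left-hand side of \eqref{1}. Since $L$ solves $L''-(\alpha_1+\alpha_2)L'+\alpha_1\alpha_2L=0$, substituting gives an identity
\[
f^{n-3}\,\Phi(z,f,f',f'',f''')=-\bigl(p f(z+c)\bigr)''+(\alpha_1+\alpha_2)\bigl(p f(z+c)\bigr)'-\alpha_1\alpha_2\,p f(z+c),
\]
where $\Phi$ is a homogeneous differential polynomial of degree $3$ in $f$.

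\emph{Case $\Phi\not\equiv 0$.} The right-hand side has characteristic at most $T(r,f)+S(r,f)$ by the difference lemma, while the left-hand side has characteristic at least $(n-3)T(r,f)-S(r,f)$. A Clunie/Yang type estimate should then yield $n\le 4$, i.e.\ exclude $n\ge5$ as in \cite{jh}. This will need a sharper counting using $N(r,1/f)$ and $m(r,\Phi/f^3)=S(r,f)$.

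\emph{Case $\Phi\equiv 0$.} Then $f^{n-1}(f+\omega f')$ itself satisfies the second-order linear ODE, so it equals $a_1e^{\alpha_1z}+a_2e^{\alpha_2z}$. Consequently $p(z)f(z+c)=(p_1-a_1)e^{\alpha_1z}+(p_2-a_2)e^{\alpha_2z}$. Comparing exponent sets of $f$ and $f^{n-1}(f+\omega f')$ via Borel's theorem then gives $n\alpha_i=\alpha_j$, which is excluded by $\alpha_1/\alpha_2\neq n^{\pm1}$ and $\alpha_1\neq\alpha_2$. Thus $n=4$.

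\textbf{Step 3: $\bar{\lambda}(f)=\rho(f)$.} Trivially $\bar{\lambda}(f)\le\rho(f)$. Suppose $\bar{\lambda}(f)<\rho(f)\le1$. By Hadamard, $f=h e^{az}$ with $a\neq0$ and $\rho(h)<1$. Substituting into \eqref{1} gives
\[
h^{n}\Bigl(1+\omega\bigl(\tfrac{h'}{h}+a\bigr)\Bigr)e^{naz}+p(z)h(z+c)e^{ac}e^{az}=p_1e^{\alpha_1z}+p_2e^{\alpha_2z}.
\]
The coefficients have order $<1$, so Borel's lemma on exponential polynomials forces the frequencies to match. Here $na\neq a$, and the coefficient $p h(z+c)$ is nonzero. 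Moreover, $1+\omega(h'/h+a)\equiv0$ would make $h$ exponential of order $1$, which is impossible. Therefore $\{na,a\}=\{\alpha_1,\alpha_2\}$, contradicting $\alpha_1/\alpha_2\neq n^{\pm1}$. Hence $\bar{\lambda}(f)=\rho(f)$.

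The main obstacle is Step 2 with $\Phi\not\equiv0$. There the naive count only gives $n\le 4$ with some slack, so the exact bound $n=4$ requires a careful estimate of $N(r,1/f)$ against the zeros of $\Phi$ and of the difference term.
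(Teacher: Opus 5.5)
\textbf{The gap is in Step 3.} You assume $\bar\lambda(f)<\rho(f)$ and then use Hadamard to write $f=he^{az}$ with $\rho(h)<1$. But the canonical product $h$ has order $\lambda(f)$, the exponent of convergence of zeros counted with multiplicity. $\bar\lambda(f)<1$ does not imply $\lambda(f)<1$: zeros at $2^k$ of multiplicity $2^k$ give $\bar\lambda=0$ but $\lambda=1$. So as written you only prove $\lambda(f)=\rho(f)$, which is weaker than the statement.

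The missing idea is control of the multiple zeros, and this is exactly what the paper's proof supplies.
\begin{itemize}
\item Every monomial of $\Phi$ contains at least two factors from $\{f,f'\}$. So a zero of $f$ of multiplicity $k\ge2$ is a zero of $\Phi$ of multiplicity at least $k-1$, which gives $N(r,1/f)\le\bar N(r,1/f)+N(r,1/\Phi)$.
\item For $n=4$ the identity $f\Phi=-(H''-(\alpha_1+\alpha_2)H'+\alpha_1\alpha_2H)$ has right-hand side of degree $1$. The Clunie-type lemma then gives $m(r,\Phi)=S(r,f)$, hence $T(r,\Phi)=S(r,f)$, since $\Phi$ is entire and, by your own Case $\Phi\equiv0$, not identically zero.
\item Combining these with $3m(r,1/f)\le m(r,1/\Phi)+m(r,\Phi/f^3)=S(r,f)$ yields $T(r,f)\le\bar N(r,1/f)+S(r,f)$, i.e.\ $\rho(f)\le\bar\lambda(f)$.
\end{itemize}
This needs no Hadamard--Borel step at all. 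If you want to keep your route instead, note that an $S(r,f)$ bound on the multiple-zero counting function does not push its exponent of convergence below $1$. You would need the finite-order form of the Clunie-type estimate, $T(r,\Phi)=O(r^{\varepsilon})$, to get $\lambda(f)=\bar\lambda(f)$ before factoring.

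\textbf{Your flagged obstacle in Step 2 is not one.} In Case $\Phi\not\equiv0$ your two bounds already give $(n-3)T(r,f)\le T(r,f)+S(r,f)$, i.e.\ $(n-4)T(r,f)\le S(r,f)$. This excludes $n\ge5$, and since $n\ge4$ is a hypothesis it gives $n=4$; no sharper counting is needed. You should justify the lower bound, e.g.\ by $nT(r,f)\le T(r,f^{n-3}\Phi)+T(r,\Phi/f^3)+O(1)$ together with $T(r,\Phi/f^3)\le 3\bar N(r,1/f)+S(r,f)$ (poles of $\Phi/f^3$ lie at zeros of $f$ and have order at most $3$). This is a valid alternative to the paper, which applies the Clunie-type lemma to $f^{n-4}(f\phi)$ and then bounds $T(r,f)\le m(r,f\phi)+m(r,1/\phi)$.

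\textbf{Three smaller repairs are needed.}
\begin{itemize}
\item In Step 3, $1+\omega(h'/h+a)\equiv0$ does not make $h$ of order $1$ when $\omega a=-1$; then $h$ is constant. That case must be excluded separately, since $p(z)h(z+c)e^{ac}e^{az}=p_1e^{\alpha_1z}+p_2e^{\alpha_2z}$ is impossible by Borel's lemma when $\alpha_1\ne\alpha_2$ and $p_1p_2\ne0$.
\item The same degeneracy must be handled in your Case $\Phi\equiv0$: $\omega\alpha_i=-1$ kills the leading coefficient of $e^{n\alpha_iz}$, and you need to deal with this before Borel yields $n\alpha_i=\alpha_j$.
\item In Step 1 you cannot drop $\bar N(r,1/f)$ from $T(r,1+\omega f'/f)$. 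The count is really $(n-2)T(r,f)\le O(r)+S(r,f)$, which still gives $\rho(f)\le1$.
\end{itemize}
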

The next theorem is a refinement of Theorem \ref{theoremA}, obtained by incorporating an additional condition on the growth of the solution $f$.
\begin{theorem}
For $n=3$ and $4$, let  $p(z)$ be a non-vanishing polynomial and  $ p_1, p_2, \alpha_1$ and $ \alpha_2$ be non-zero constants such that $\alpha_1 \neq \alpha_2$ and $\frac{\alpha_1}{\alpha_2}\neq (n)^{\pm1}$. If ${\lambda}(f)<\rho(f)$, then equation \eqref{1}  does not admit any meromorphic solution $f(z)$. 
\end{theorem}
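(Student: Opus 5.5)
Suppose, for contradiction, that $f$ is a meromorphic solution of \eqref{1} with $n\in\{3,4\}$ and $\lambda(f)<\rho(f)$. The plan is to use $\lambda(f)<\rho(f)$ to give $f$ a Hadamard-type factorization, and then to compare exponential terms in \eqref{1} via Borel's theorem. Throughout I use the standing hypotheses $\alpha_1\neq\alpha_2$ and $\alpha_1/\alpha_2\neq n^{\pm1}$.

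\emph{Step 1: finite order and factorization.} First we need $\rho(f)<\infty$. Comparing Nevanlinna characteristics in \eqref{1} gives $nT(r,f)\le T(r,e^{\alpha_1z})+T(r,e^{\alpha_2 z})+S(r,f)+O(T(r,f(z+c)))$. This is delicate because of the shift term, so I would rather work directly. The right-hand side has order $1$ and no poles, and a pole of $f$ of multiplicity $m$ produces a pole of order $nm$ or $nm+1$ in $f^n+\omega f^{n-1}f'$. This pole can only be cancelled by a pole of $f(z+c)$, which has order $m'$; the resulting chain-of-poles argument forces $f$ to be essentially entire. With $f$ entire, the standard estimate for $f(z+c)$ when $\rho_2(f)<1$ (or directly Clunie-type reasoning) gives $\rho(f)\le1$. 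Since $\lambda(f)<\rho(f)\le 1$, we get $\rho(f)=1$ and can write $f(z)=h(z)e^{az}$ with $a\neq0$ and $h$ entire of order $\rho(h)=\lambda(f)<1$.

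\emph{Step 2: Borel comparison.} Substituting into \eqref{1} gives
\[
\bigl(h^n+\omega h^{n-1}(h'+ah)\bigr)e^{naz}+p(z)h(z+c)e^{ac}e^{az}=p_1e^{\alpha_1z}+p_2e^{\alpha_2z}.
\]
All coefficients have order $<1$, so Borel's lemma on linear combinations of exponentials with small coefficients applies to the frequencies $na$, $a$, $\alpha_1$, $\alpha_2$. Because $n\ge3$, we have $na\neq a$. The term $p(z)h(z+c)e^{ac}$ is not identically zero, since $p\not\equiv0$ and $h\not\equiv0$. Hence $a$ must equal $\alpha_1$ or $\alpha_2$, say $a=\alpha_1$. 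If $h^n+\omega h^{n-1}(h'+ah)\not\equiv0$, then $na$ must equal $\alpha_2$, so $\alpha_2/\alpha_1=n$, which is excluded. Therefore $h^{n-1}\bigl((1+\omega a)h+\omega h'\bigr)\equiv0$, that is, $\omega h'=-(1+\omega a)h$. Since $\rho(h)<1$, this forces either $h$ to be constant with $1+\omega a=0$, or $h=Ce^{bz}$, which has order $1$ and is impossible unless $b=0$. Either way $h\equiv C$ and $a=-1/\omega$. The remaining identity $p(z)Ce^{ac}e^{az}=p_1e^{\alpha_1z}+p_2e^{\alpha_2z}$ with $a=\alpha_1$ then forces $p_2e^{\alpha_2 z}\equiv0$ by Borel, because $\alpha_2\neq\alpha_1$. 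This contradicts $p_2\neq0$. The case $a=\alpha_2$ is symmetric, using $\alpha_1/\alpha_2\neq n$.

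\emph{Main obstacle.} The real difficulty is Step 1: ruling out poles and establishing finite order, since the theorem does not assume $\rho_2<1$. My first attempt would be to show $f$ is entire by counting poles. A pole of $f$ at $z_0$ forces $f(z+c)$ to have a pole of order $\ge n m\ge 3m$ at $z_0$, so $f$ has a pole at $z_0+c$ of order $\ge 3m$; iterating gives poles at $z_0+kc$ of order $\ge 3^k m$. This contradicts finite order, or already forces a contradiction through the growth of $n(r,f)$. Finite order itself I would get from $\lambda(f)<\rho(f)$, interpreting the hypothesis as meaningful only when $\lambda(f)<\infty$, combined with the Nevanlinna estimate $T(r,f)=O(r)$, which follows from \eqref{1} once $f$ is entire. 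If that estimate cannot be closed, I would explicitly invoke the hypothesis of Theorem \ref{theoremA} ($\rho_2<1$) as implicit here and use the difference analogue of the logarithmic derivative lemma.
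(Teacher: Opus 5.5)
Your Step 2 is correct, but Step 1 does not follow from the hypotheses you actually have, and the way you propose to close it is circular.

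\textbf{The gap in Step 1.} The chain-of-poles argument only shows that a pole at $z_0$ forces poles at $z_0+kc$ of multiplicity at least $n^k$. That gives $\rho_2(f)\ge1$ and $\rho(f)=\infty$. This is a contradiction only if an a priori growth bound is available, yet you intend to derive finite order afterwards, from entireness.

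Nor does $\lambda(f)<\rho(f)$ supply such a bound. It allows $\lambda(f)<\infty=\rho(f)$, and finiteness of $\lambda(f)$ says nothing about $\rho(f)$.

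Your claim that ``$T(r,f)=O(r)$ follows from \eqref{1} once $f$ is entire'' is also unjustified, because the shift term cannot be controlled without a hyper-order assumption. For instance, $f(z)=e^{e^{\beta z}}$ with $e^{\beta c}=n$ has $\lambda(f)=0<\rho(f)=\infty$ and $f(z+c)=f(z)^n$, so $T(r,f(z+c))=nT(r,f)$.

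So the standing hypothesis $\rho_2(f)<1$ of Theorem \ref{theoremA} is genuinely needed here. The paper also uses it, silently. Its proof needs $\phi$ in \eqref{A} to be entire, for instance to get $T(r,\phi)=m(r,\phi)$. That requires $f$ to be entire, which Theorem \ref{theoremA} obtained only via $\rho_2<1$.

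\textbf{How Step 1 closes under $\rho_2(f)<1$.} With this assumption your Step 1 closes in one line, and this is exactly where $n\ge3$ enters. Write $f^n=f^{n-1}(f+\omega f')/(1+\omega f'/f)$. Use $T(r,f'/f)\le\bar N(r,1/f)+S(r,f)$ for entire $f$, and $T(r,f(z+c))=T(r,f)+S(r,f)$ (Halburd--Korhonen--Tohge, valid for $\rho_2<1$). This gives
\[
nT(r,f)\le T(r,p_1e^{\alpha_1z}+p_2e^{\alpha_2z})+T(r,f(z+c))+\bar N\Bigl(r,\frac1f\Bigr)+S(r,f)\le 2T(r,f)+O(r)+S(r,f)
\]
outside a set of finite logarithmic measure. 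Hence $T(r,f)=O(r)$ there, and after removing the exceptional set in the usual way, $\rho(f)\le1$. Hadamard's theorem then gives $f=he^{az}$ with $a\neq0$ and $\rho(h)<1$, and your Borel comparison finishes.

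In fact, once $a=\alpha_1$, the frequencies $n\alpha_1$, $\alpha_1$, $\alpha_2$ are already pairwise distinct, so Borel gives $p_2=0$ at once and the ODE for $h$ is unnecessary.

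\textbf{Comparison with the paper.} The paper instead reuses the auxiliary function $\phi$ from \eqref{7}, applies Clunie's lemma and Nevanlinna counting separately for $n=4$ and $n=3$, and defers $\phi\equiv0$ to the rather sketchy argument in Theorem \ref{theoremA}. Your route, once repaired, is more transparent. Growth is used only to get $\rho(f)\le1$, and the condition $\alpha_1/\alpha_2\neq n^{\pm1}$ is used only in the Borel step. It also works verbatim for every $n\ge3$, not just $n=3,4$.
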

Wen et al. \cite{wen} studied and classified the finite  order solutions of the following equation,
$$ f^n(z)+q(z)e^{Q(z)}f(z+c)=P(z),$$
where $P,q,Q$ are polynomials, $n\geq2$ is an integer, and $c\in \mathbb{C}\setminus\{0\}$. 
In $2019$, Chen et al. \cite{ch} studied the above equation by replacing R.H.S. with $p_1 e^{\alpha z} + p_2 e^{-\alpha z}$. Motivated by this, many researchers  actively engage in extending existing theorems, and over time, the literature has witnessed the emergence of more elegant results by replacing R.H.S. with 
 $p_1 e^{\alpha_1z} + p_2 e^{\alpha_2z}$.
 In this sequence, Wang et al.\cite{wangy} proved the following result.
 \begin{thA}\cite{wangy}\label{ww}
	Let $n\geq4$ be an integer, $q(z)$ be a rational function and $Q(z)$ be a non-constant polynomial. Suppose that $c$, $p_1$, $p_2$, $\alpha_1$, and $\alpha_2\in \mathbb{C}\setminus\{0\}$ with $\alpha_1\neq \alpha_2$. If the equation 
	\begin{equation}\label{6}
		f^n(z)f'(z)+q(z)f(z+c)e^{Q(z)}=p_1e^{\alpha_1z}+p_2e^{\alpha_2z},
	\end{equation}
	admits a finite order meromorphic solution $f(z)$ with $\lambda(f) <\rho(f)$ and $\lambda\left(\frac{1}{f}\right) <\rho(f)$, then $f(z)$ satsfies $\rho(f)=\deg Q =1$.\;Furthermore, $f(z)=A_1e^{\frac{\alpha_2}{n+1}z}$,\linebreak $Q(z)=\left(\alpha_1-\frac{\alpha_2}{n+1}\right)z + a_0$
	or $f(z)=A_2e^{\frac{\alpha_1}{n+1}z}$, $Q(z)=\left(\alpha_2-\frac{\alpha_1}{n+1}\right)z+a_0$, where $A_1,A_2, a_o \in \mathbb{C}\setminus\{0\}$.
\end{thA}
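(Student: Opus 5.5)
Since the right-hand side of \eqref{6} is transcendental, $f$ is transcendental. The plan is to factor $f$ by Hadamard's theorem and then compare exponential terms via a Borel-type uniqueness lemma (Nevanlinna's theorem on linear combinations of exponentials whose coefficients have order smaller than the degrees of the exponent differences).

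\emph{Factorization.} The hypotheses $\lambda(f)<\rho(f)$ and $\lambda(1/f)<\rho(f)$, with $\rho(f)<\infty$, give
$$f(z)=h(z)e^{P(z)},$$
where $P$ is a polynomial with $\deg P=\rho(f)\ge 1$ and $h$ is meromorphic with $\rho(h)<\deg P$. (If $\rho(f)=0$, then $\lambda(f)<\rho(f)$ would be impossible.) Substituting into \eqref{6} gives
$$H_1 e^{(n+1)P(z)}+H_2 e^{P(z+c)+Q(z)}=p_1e^{\alpha_1 z}+p_2e^{\alpha_2 z},$$
with
$$H_1=h^n(h'+P'h),\qquad H_2=q(z)h(z+c).$$
We have $H_2\not\equiv0$. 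Also $H_1\not\equiv0$, since $h'+P'h\equiv0$ would mean $f'\equiv0$. Both $H_1$ and $H_2$ have order at most $\rho(h)<\deg P$.

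\emph{Step 1: $\deg P=1$.} Suppose $\deg P\ge2$. Then $(n+1)P$ differs from each linear exponent $\alpha_i z$ by a polynomial of degree $\ge2>\rho(h)$. Consider the term $e^{P(z+c)+Q}$.
\begin{itemize}
\item If its exponent is, up to a constant, equal to $(n+1)P$, the two left-hand terms merge into one term. Borel's lemma then forces that merged coefficient to vanish, so $p_1e^{\alpha_1z}+p_2e^{\alpha_2z}\equiv0$. This contradicts $\alpha_1\ne\alpha_2$ and $p_1p_2\ne0$.
\item If its exponent is not of that form, Borel's lemma applied to the terms $e^{(n+1)P}$, $e^{P(z+c)+Q}$, $e^{\alpha_1 z}$, $e^{\alpha_2 z}$ forces $H_1\equiv0$, or again a vanishing combination of $e^{\alpha_iz}$. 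Both are contradictions.
\end{itemize}
Hence $P(z)=az+b$ with $a\ne0$, and $\rho(h)<1$.

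\emph{Step 2: $\deg Q=1$.} If $\deg Q\ge2$, the exponent of $e^{P(z+c)+Q}$ has degree $\ge2$, while all other exponents are linear. Borel's lemma then gives $H_2\equiv0$, which is impossible. Since $Q$ is non-constant, $\deg Q=1$, and so $\rho(f)=\deg Q=1$.

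\emph{Step 3: matching the exponents.} All four exponents are now linear: $(n+1)a$, $a+Q'$, $\alpha_1$, $\alpha_2$. The coefficients have order $<1$, so Borel's lemma applies to frequency classes. The right-hand side has two distinct frequencies with nonzero coefficients, and both $H_1,H_2\not\equiv0$.
\begin{itemize}
\item If $(n+1)a=a+Q'$ and this common value is not one of the $\alpha_i$, the two left-hand terms cancel and the right-hand side vanishes, a contradiction.
\item If $(n+1)a=a+Q'=\alpha_j$, the other frequency $\alpha_i$ is left unmatched, again a contradiction.
\end{itemize}
Hence either $(n+1)a=\alpha_2$ and $a+Q'=\alpha_1$, or the roles of $\alpha_1$ and $\alpha_2$ are swapped. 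Take the first case. Borel's lemma then gives
$$h^n(h'+ah)\equiv p_2e^{-(n+1)b},$$
a nonzero constant. A zero of $h$ would be a zero of the left side. A pole of $h$ would be a pole of order at least $n+1$ of the left side, since $h'+ah$ has a pole there of order at least the order of the pole of $h$. So $h$ has no zeros or poles, and $h=e^{g}$ with $g$ entire. Since $\rho(h)<1$, $g$ is constant, so $h$ is constant. This yields
$$f=A_1e^{\frac{\alpha_2}{n+1}z},\qquad Q(z)=\Bigl(\alpha_1-\frac{\alpha_2}{n+1}\Bigr)z+a_0.$$
The symmetric case gives the second family.

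\emph{Main obstacle.} The delicate point is Step 1 and the start of Step 3. There the growth condition on the coefficients must be checked carefully against every exponent difference, and one must rule out partial cancellations in which two left-hand exponents coincide with each other or with some $\alpha_i$. I would organize this by listing the possible equalities among the exponent polynomials modulo constants and treating each case with Borel's lemma. The hypothesis $n\ge4$ appears only as a safety margin in this argument. It may be needed in the original source to control $\rho(h)$ via a Clunie-type estimate if one does not assume a priori that $f$ has finite order.
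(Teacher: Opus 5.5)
Your argument is correct in substance, and it takes a genuinely different route from the paper. The paper does not prove Theorem A itself; it quotes it from Wang et al. Its own version of the argument, for the companion result with $n=1,2,3$, works with Nevanlinna theory for $f$. It eliminates $e^{Q}$ to get $f^{n-1}\psi=RS'-PS$, then applies the Clunie-type Lemma 4 and the logarithmic-derivative and difference estimates, case by case in $n$, to show $\rho(f)=1$. It then defers $\deg Q=1$ and the explicit form of $f$ to Li and Huang. You instead use the hypotheses $\lambda(f),\lambda(1/f)<\rho(f)$ immediately for Hadamard's factorization $f=he^{P}$, after which everything is a comparison of exponentials with small coefficients. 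This gives several things:
\begin{itemize}
\item a self-contained proof that produces $f$ and $Q$ explicitly;
\item an immediate proof that $\rho(f)$ is a positive integer, so a case like $\rho(f)<1$ is vacuous;
\item no use of $n\geq4$, as you suspected: the argument works for every $n\geq1$, so it also covers the paper's $n=1,2,3$ result.
\end{itemize}
The restriction on $n$ comes from degree counting in the Clunie approach. That approach's advantage is that it needs no factorization of $f$, which is why the paper also uses it for Theorems 1 and 2.

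Step 1 needs a repair, though not the one you propose.
\begin{itemize}
\item For $\deg P\geq2$ you only know $\rho(h)<\deg P$. Your ``$\geq2>\rho(h)$'' should read ``$=\deg P>\rho(h)$''.
\item So $\rho(h)\geq1$ is possible, and then $T(r,H_j)$ need not be $o(r)$.
\item Borel's lemma therefore cannot treat $e^{\alpha_1z}$ and $e^{\alpha_2z}$ as separate terms. Nor can it separate $e^{(n+1)P}$ and $e^{P(z+c)+Q}$ when their difference is non-constant but of low degree. Sorting exponents modulo constants is not enough.
\end{itemize}
Sort them modulo polynomials of degree $<\deg P$ instead. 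Keep $S=p_1e^{\alpha_1z}+p_2e^{\alpha_2z}$ as one coefficient of $e^{0}$. Merge $H_2e^{P(z+c)+Q}$ into the $e^{(n+1)P}$ group or the $e^{0}$ group whenever the relevant difference has degree $<\deg P$. Every merged coefficient then has order $<\deg P$, and the remaining exponent differences have degree $\geq\deg P$. Borel's lemma, or a plain order comparison, then gives $H_1\equiv0$ or $S\equiv0$, exactly as you claim. Step 3 is fine as written, since there $\rho(h)<1$.

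Finally, your derivation rightly leaves $a_0$ free. The condition $a_0\neq0$ in the statement is not actually forced: $f=e^{z}$ solves $f^4f'+f(z+1)e^{2z}=e\cdot e^{3z}+e^{5z}$ with $Q(z)=2z$.
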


In our previous work \cite{ng}, based on an example, we indicated that the given Theorem $A$ might hold for $n = 2,3$, but we lacked a full proof at that time. Now, we rigorously prove that the result holds for  $ n =1, 2, $ and $3 $ along with supporting examples.
	  
	  \begin{theorem}
	  	Suppose $n=1$, $2$ or $3$, $q(z)$  a rational function, $Q(z)$  a non-constant polynomial, and $c$, $p_1$, $p_2$, $\alpha_1$, $\alpha_2\in \mathbb{C}\setminus\{0\}$ with $\alpha_1\neq \alpha_2$. If the equation 
	  	\begin{equation}\label{z}
	  		f^n(z)f'(z)+q(z)f(z+c)e^{Q(z)}=p_1e^{\alpha_1z}+p_2e^{\alpha_2z},
	  	\end{equation}
	  	admits a finite order meromorphic solution $f(z)$ with $\lambda(f) <\rho(f)$ and $\lambda\left(\frac{1}{f}\right) <\rho(f)$, then $f(z)$ satsfies $\rho(f)=\deg Q =1$.\;Furthermore, $f(z)=A_1e^{\frac{\alpha_2}{n+1}z}$,\linebreak $Q(z)=\left(\alpha_1-\frac{\alpha_2}{n+1}\right)z + a_0$
	  	or $f(z)=A_2e^{\frac{\alpha_1}{n+1}z}$, $Q(z)=\left(\alpha_2-\frac{\alpha_1}{n+1}\right)z+a_0$, where $A_1,A_2, a_o \in \mathbb{C}\setminus\{0\}$.
	  	
	  	\end{theorem}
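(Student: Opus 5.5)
Since $\lambda(f)<\rho(f)$ and $\lambda(1/f)<\rho(f)$, the Hadamard factorization gives
\[
f(z)=\frac{h_1(z)}{h_2(z)}\,e^{P(z)}=h(z)e^{P(z)},
\]
where $h_1,h_2$ are the canonical products of the zeros and poles of $f$, $P$ is a polynomial with $\deg P=\rho(f)$, and $\rho(h)<\rho(f)$. In particular $f$ is of finite order with $\rho(f)$ a positive integer. Substituting into \eqref{z} gives
\[
h^n(h'+hP')e^{(n+1)P}+q(z)h(z+c)e^{P(z+c)+Q(z)}=p_1e^{\alpha_1z}+p_2e^{\alpha_2z}.
\]
The first task is to show $\deg P=1$. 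If $\deg P\ge 2$, the exponents $(n+1)P$, $P(z+c)+Q$, $\alpha_1 z$ and $\alpha_2 z$ are compared with Borel's lemma (in the form for exponential polynomials with small coefficients of order $<\rho(f)$). The term $e^{(n+1)P}$ has degree $\ge2$ and cannot cancel against $e^{\alpha_i z}$. It can only be balanced by $e^{P(z+c)+Q}$, which forces $Q=nP+\text{const}$ up to lower-order terms. Then the remaining identity $\phi_1 e^{(n+1)P}=p_1e^{\alpha_1z}+p_2e^{\alpha_2z}$ with a small function $\phi_1$ contradicts Borel's lemma unless both sides vanish, which is impossible. Similarly, $\rho(f)=0$ is excluded because then $f$ would be essentially rational and the right-hand side could not be matched by a single term. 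So $P(z)=az+b$ with $a\neq0$ and $\rho(f)=1$.

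With $\deg P=1$, we have $\rho(h)<1$, and the equation becomes
\[
\phi_1 e^{(n+1)az}+\phi_2e^{P(z+c)+Q(z)}=p_1e^{\alpha_1z}+p_2e^{\alpha_2z},
\]
where $\phi_1=h^n(h'+ah)$ and $\phi_2=q\,h(z+c)$ have order $<1$. If $\deg Q\ge2$, the term $e^{Q+P(z+c)}$ cannot be matched by any other term, so $\phi_2\equiv0$, a contradiction. Hence $\deg Q=1$, and we write $Q=\beta z+a_0$.

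Now I apply the Borel-type lemma (linear independence of $e^{\gamma_j z}$ with distinct $\gamma_j$ over functions of order $<1$). If $(n+1)a\notin\{\alpha_1,\alpha_2\}$, then $e^{(n+1)az}$ must pair with $e^{(a+\beta)z}$, which leaves a single term $p_1e^{\alpha_1z}+p_2e^{\alpha_2z}=0$, impossible. So without loss of generality $(n+1)a=\alpha_2$, which gives $\phi_1=p_2$. The remaining term must then satisfy $a+\beta=\alpha_1$ with $\phi_2 e^{P(z+c)-az}=p_1$. Otherwise $p_1e^{\alpha_1z}$ would be unmatched; the degenerate case $a+\beta=\alpha_2$ is ruled out because $\alpha_1\ne\alpha_2$. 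This yields $Q(z)=\left(\alpha_1-\frac{\alpha_2}{n+1}\right)z+a_0$.

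The main obstacle is proving that $h$ is constant. From $h^n(h'+ah)=p_2$ with $h$ meromorphic of order $<1$, any pole of $h$ produces a pole of the left-hand side, so $h$ is entire; any zero gives the left-hand side value $0\ne p_2$, so $h$ has no zeros. An entire zero-free function of order $<1$ is a constant $A$, and then $A^{n+1}a=p_2$ is consistent. Here the cases $n=1,2,3$ need no extra counting, unlike in Theorem A, since the argument only uses the decomposition. I also need to make sure that the second relation $q(z)h(z+c)e^{ac}=p_1$ is compatible: it forces $q$ to be constant, which is allowed. This gives $f=A_1e^{\frac{\alpha_2}{n+1}z}$, and symmetrically $f=A_2e^{\frac{\alpha_1}{n+1}z}$. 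Finally, I would add explicit examples for $n=1,2,3$ to show that both forms actually occur.
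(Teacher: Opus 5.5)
Your proof is correct, and it takes a genuinely different route from the paper's.

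The paper stays within Nevanlinna-theoretic estimates. For $\rho(f)<1$ it forces $\deg Q=1$ and, via Borel's lemma, reduces to the equation $\alpha_2 s-s'-(n+1)s^2=0$ for $s=f'/f$. For $\rho(f)>1$ it eliminates $e^{Q}$ to get $f^{n-1}\psi=RS'-PS$. It then applies the Clunie-type lemma with logarithmic-derivative and difference estimates separately for $n=3,2,1$, using the Hadamard factorization only in the degenerate case $\psi\equiv 0$. The determination of $Q$ and $f$ is not proved in the paper; it is delegated to the earlier work of Li and Huang.

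You instead use $\lambda(f),\lambda(1/f)<\rho(f)$ at full strength from the start. Writing $f=he^{P}$ with $\rho(h)<\deg P=\rho(f)$ reduces everything to Borel's lemma with coefficients of order $<\deg P$. This gains several things:
\begin{itemize}
\item it is shorter and more elementary;
\item it shows at once that $\rho(f)$ is a positive integer, so the paper's case $\rho(f)<1$ cannot occur;
\item it never uses $n\le 3$, so it also reproves Theorem A;
\item it actually derives the explicit forms of $f$ and $Q$, plus the extra fact that $q$ must be constant.
\end{itemize}
The paper's route mainly has the merit of following the standard Clunie-lemma template, whose estimates do not need an explicit factorization of $f$.

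In a write-up, make the $\deg P=k\ge 2$ step precise by splitting into two cases.
\begin{itemize}
\item If $\deg\bigl(P(z+c)+Q-(n+1)P\bigr)<k$, merge the two left-hand terms into $\tilde\phi\, e^{(n+1)P}$ with $\tilde\phi$ of order $<k$, and compare orders with $p_1e^{\alpha_1z}+p_2e^{\alpha_2z}$. Do not reuse the name $\phi_1$ for this merged coefficient.
\item Otherwise, first move $e^{P(z+c)+Q}$ to the right-hand side if its degree is $<k$. Borel's lemma then gives $h^n(h'+hP')\equiv 0$, i.e.\ $f$ is constant.
\end{itemize}
Also keep the constant $e^{(n+1)b}$ when $P=az+b$, or normalize $b=0$. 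Finally, $\rho(f)=0$ needs no separate argument: $\rho(f)=\deg P$ with $\deg P\ge 1$ for nonconstant $f$.
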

  	 The following examples demonstrate our result and cover the cases  $n=1,2,3$. The first example illustrates the case   $n=1$. 
  	 \begin{example}
  	 	$f(z)=e^{z}$ is a meromorphic solution of the differential-difference equation   $$f(z)f'(z)+2f(z+\log 2)e^{2z+\log 3}=1e^{2z}+12e^{3z},$$
  	 	where $n=1$, $\alpha_1=2$, $\alpha_2=3$ and $a_0= \log3$, then $f(z)=A_1e^{\frac{\alpha_1}{n+1}z}=e^{z}$,\linebreak $Q(z)=\left(\alpha_2-\frac{\alpha_1}{n+1}\right)z + a_0=2z+\log3$ and $\rho(f)= \deg Q=1.$
  	 \end{example}

  The next example gives the existence for $n=2$.
  
	  	\begin{example} $f(z)=e^{z}$ is a meromorphic solution of the differential-difference equation
	  		\begin{equation*}
	  			f^2(z)f'(z)+\frac{1}{7}f(z+\log 3)e^{-3z-\log3}=\frac{1}{7}e^{-z}+e^{3z},
	  		\end{equation*}
	  		where $n=2$, $\alpha_1=-2$, $\alpha_2=3$ and $a_0= \log3$, then $f(z)=A_1e^{\frac{\alpha_2}{n+1}z}=e^{z}$,\linebreak $Q(z)=\left(\alpha_1-\frac{\alpha_2}{n+1}\right)z + a_0=-3z-\log3$ and $\rho(f)= \deg Q=1.$
	  	\end{example}
	The next example demonstrates the existence of solutions in the case $n=3$.
	
	\begin{example}  $f(z)=e^{2z}$ is a meromorphic solution of the differential-difference equation
		\begin{equation*}
			f^3(z)f'(z)+\frac{1}{8}f(z-\log 4)e^{\left(\frac{-3}{2}z+\log16)\right)}=\frac{1}{8}e^{\frac{1}{2}z}+2e^{8z},
		\end{equation*}
		where $n=3$, $\alpha_1=\frac{1}{2}$, $\alpha_2=8$ and $a_0= \log16$, then $f(z)=A_1e^{\frac{\alpha_2}{n+1}z}=e^{2z}$,\linebreak $Q(z)=\left(\alpha_1-\frac{\alpha_2}{n+1}\right)z + a_0= \frac{-3}{2}z+\log16$ and $\rho(f)= \deg Q=1$.
	\end{example}
	
	 Throughout this paper, we assume that the reader is acquainted with the standard notations of Nevanlinna theory. For a meromorphic function $f$, $n(r,f)$, $N(r,f)$, $\Bar{N}(r,f) $, $m(r,f)$ and  $T(r,f)$ denote un-integrated counting function, integrated counting function, reduced counting function, proximity function and characteristic function respectively, see \cite{hay,ilpo,yanglo}.
	
	In Section $2$, we state  some definitions, lemmas  and results. In Section 3, we prove theorems.
	\medskip
	
	\section{\textbf{Auxiliary results}}
		To maintain self-containment, we briefly recall the definitions of the order of growth 
	$\rho(f)$,  hyper-order of growth $\rho_{2}(f)$  and the exponent of convergence of zeros $\lambda(f)$ for a meromorphic function $f(z)$.
	
	$$\rho(f)=\limsup_{r\to\infty}\frac{\log T(r,f)}{\log r},$$
	$$\rho_2(f)=\limsup_{r\to\infty}\frac{\log \log T(r,f)}{\log r},$$
	and
$$\lambda(f)=\limsup_{r\to\infty}\frac{\log n\left(r,\frac{1}{f}\right)}{\log r}.$$ 

	\bigskip
 
		For a meromorphic function $f$, Nevanlinna’s First Main Theorem asserts  that
	$$T\left(r,\frac{1}{f-a}\right)=T(r,f)+O(1),$$
	for all $a\in \mathbb{C}$,
	where $	O(1)$ denotes a bounded quantity depending only on $a$.
	\smallskip
	
	A meromorphic function $g(z)$ is a small function of $f(z)$ if $T(r,g)=S(r,f)$ and vice versa.
	For a meromorphic function $f(z)$,  $S(r,f)$ denotes the quantity satisfying $S(r,f)=o(T(r,f))$, as $r\to\infty$, outside  of a possible exceptional set $E$ (not necessarily same at each occurrence) of  finite linear measure. \\

 Borel’s lemma is a fundamental tool in applying Nevanlinna theory to complex differential equations.
	\begin{lemma}\label{imple1}(Borel's Lemma)\cite{cc}
		Suppose $f_1(z),f_2(z),...,f_n(z)(n\geq2)$ are meromorphic functions and $h_1(z),h_2(z),...,h_n(z)$ are entire functions satisfying:
		\begin{enumerate}
			\item $\sum_{i=1}^{\infty} f_i(z)e^{h_i(z)}\equiv 0$.
			\item For $1\leq i< k\leq n$, $h_i-h_k$ are  constants.
			\item  $1\leq i\leq n$, $1\leq m< k\leq n$,  $T(r,f_i(z))=o(T(r,e^{h_m-h_k}))$ as $r\rightarrow \infty$ outside of a set of finite linear measure.
		\end{enumerate}
		Then $f_i\equiv0\ (i=1,2,3,...,n)$.
	\end{lemma}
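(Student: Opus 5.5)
The statement immediately above is Lemma~\ref{imple1} (Borel's Lemma), which is quoted from \cite{cc}. The plan is to prove it by induction on $n$, using Nevanlinna's Second Main Theorem and its generalization to linear combinations. Note that hypothesis (2) must be read as ``$h_i-h_k$ are \emph{not} constants''; otherwise the statement is false.

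\textbf{Base step $n=2$.} Suppose $f_1e^{h_1}+f_2e^{h_2}\equiv0$ with $f_1\not\equiv0$. Then $f_2\not\equiv 0$ as well, and
\[
e^{h_1-h_2}=-f_2/f_1 .
\]
Hence $T(r,e^{h_1-h_2})\le T(r,f_1)+T(r,f_2)+O(1)$. Hypothesis (3) says the right-hand side is $o(T(r,e^{h_1-h_2}))$ outside a set of finite linear measure. This is a contradiction, since $e^{h_1-h_2}$ is non-constant and so its characteristic is unbounded.

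\textbf{Inductive step.} Assume the result for fewer than $n$ terms. Suppose some $f_i\not\equiv0$; by the induction hypothesis, in fact all $f_i\not\equiv0$. Divide the identity by $f_ne^{h_n}$ to get
\[
\sum_{i=1}^{n-1} g_i=-1, \qquad g_i=\frac{f_i}{f_n}e^{h_i-h_n}.
\]
Apply the Nevanlinna--Borel type theorem for a sum of meromorphic functions equal to a constant: if $\sum_{i=1}^{n-1}g_i\equiv -1$ and no proper subsum vanishes identically, then
\[
T(r,g_j)\le \sum_{i}\Big(N(r,1/g_i)+N(r,g_i)\Big)+ S(r).
\]
This is proved via the Wronskian of $g_1,\dots,g_{n-1}$ and the logarithmic derivative lemma. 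It requires the $g_i$ to be linearly independent. If they are dependent, a nontrivial constant relation $\sum c_i g_i\equiv0$ gives a shorter identity of the same form, with coefficients $c_if_i$. The induction hypothesis then forces the $c_i f_i$ to vanish, contradicting $f_i\not\equiv0$.

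\textbf{Main estimate and the key obstacle.} The zeros and poles of $g_i$ come only from $f_i$ and $f_n$, since exponentials of entire functions have none. So the right-hand side is
\[
O\Big(\sum_k T(r,f_k)\Big)+S(r),
\]
which by (3) is $o(T(r,e^{h_j-h_n}))$. On the other hand,
\[
T(r,g_j)\ge T(r,e^{h_j-h_n})-T(r,f_j)-T(r,f_n),
\]
which gives a contradiction. I expect the main care to be needed in controlling the error term $S(r)$. It is $O(\log r + \log T)$ of the relevant exponentials, and it must be shown to be $o(T(r,e^{h_m-h_k}))$ off an exceptional set of finite linear measure. This follows because each $T(r,e^{h_m-h_k})$ grows faster than $\log r$, the differences being non-constant entire functions. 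It also needs the standard comparison
\[
T(r,e^{h_i-h_n}) \le T(r,e^{h_i-h_m})+T(r,e^{h_m-h_n}),
\]
so that all the error terms are measured against the same family of exponentials.
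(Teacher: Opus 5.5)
\textbf{Overall.} The paper gives no proof of this lemma; it only quotes it from \cite{cc}. You are right that hypothesis (2) must read ``$h_i-h_k$ is \emph{not} constant'', and that the sum in (1) runs to $n$. Your route is the standard proof and is sound in outline: induction on $n$, removing linear dependence through the shorter identity $\sum c_if_ie^{h_i}\equiv 0$, and then the Wronskian form of the Borel--Nevanlinna theorem. The exact inequality has an extra Wronskian pole term, bounded by $(n-2)\sum_k\overline{N}(r,g_k)$. Since you only use the bound $O(\sum_kT(r,f_k))+S(r)$, this does no harm.

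\textbf{The gap: controlling $S(r)$ for a fixed $j$.} In the Borel--Nevanlinna inequality, $S(r)=O(\log r+\log T(r))$ with $T(r)=\max_{1\le i\le n-1}T(r,g_i)$. This is because the logarithmic derivatives of \emph{all} the $g_i$ enter the Wronskian quotient. For a fixed $j$, this need not be $o(T(r,e^{h_j-h_n}))$. For example, take $n=3$, $h_3=0$, $h_1=z$, $h_2=e^{e^z}$ and rational $f_i$. Then $T(r,e^{h_1-h_3})=r/\pi$, while $T(r,F)\ge\frac13\log M(r/2,F)$ gives $\log T(r,e^{h_2-h_3})\ge e^{r/2}-\log 3$. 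So your claim that the error term is $o(T(r,e^{h_m-h_k}))$ for every pair is false. Noting that each $T(r,e^{h_m-h_k})$ dominates $\log r$ only handles the $\log r$ part. The triangle inequality you quote bounds characteristics from above, so it cannot make a small characteristic dominate the logarithm of a large one.

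\textbf{The fix.} Choose $j$ carefully. The inequality holds for every $j$ off a common exceptional set of finite linear measure. So for each $r$, take an index $j$ with $T(r,g_j)=T(r)$. This gives $T(r)\le O(\sum_kT(r,f_k))+o(T(r))$. Also $T(r)\ge T(r,g_1)\ge T(r,e^{h_1-h_n})-T(r,f_1)-T(r,f_n)-O(1)$. Hence hypothesis (3) gives $\sum_kT(r,f_k)=o(T(r))$ and $T(r)\to\infty$. Therefore $T(r)=o(T(r))$ outside a set of finite linear measure, which is a contradiction. With this choice of $j$, the comparison of exponentials in your last paragraph is no longer needed.
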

We now state a lemma that gives an estimate for the proximity function of the logarithmic derivative of a meromorphic function $f(z)$. 

	\begin{lemma}\label{imple2}\cite{ilpo}
		Suppose $f(z)$ is a transcendental meromorphic function and $k\geq1$ is an integer. Then 
		\begin{equation*}
			m\left(r,\frac{f^{(k)}}{f}\right)= S(r,f).
		\end{equation*}
	If $f$ is of finite order growth, then
	$$m\left(r,\frac{f^{(k)}}{f}\right)=O(\log r).$$
	\end{lemma}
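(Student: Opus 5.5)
Lemma \ref{imple2} is the classical lemma on the logarithmic derivative. I would prove it first for $k=1$ and then pass to higher derivatives by a telescoping product. The starting point is the differentiated Poisson--Jensen formula. For $|z|=r<R$, with $a_\mu$ and $b_\nu$ the zeros and poles of $f$ in $|\zeta|<R$,
\begin{equation*}
\frac{f'(z)}{f(z)}=\frac{1}{2\pi}\int_0^{2\pi}\log|f(Re^{i\theta})|\,\frac{2Re^{i\theta}}{(Re^{i\theta}-z)^2}\,d\theta+\sum_{|a_\mu|<R}\Big(\frac{1}{z-a_\mu}+\frac{\bar a_\mu}{R^2-\bar a_\mu z}\Big)-\sum_{|b_\nu|<R}\Big(\frac{1}{z-b_\nu}+\frac{\bar b_\nu}{R^2-\bar b_\nu z}\Big).
\end{equation*}
The plan is to bound each piece and then use $\log^+\big(\sum_{j=1}^N x_j\big)\le\sum_{j=1}^N\log^+x_j+\log N$.

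\textbf{Step 1: the integral term.} Its modulus is at most $\frac{2R}{(R-r)^2}\big(m(R,f)+m(R,1/f)\big)$. By the First Main Theorem this is $\le\frac{4R}{(R-r)^2}\big(T(R,f)+O(1)\big)$, so its contribution to $\log^+$ is $O\big(\log^+T(R,f)+\log^+R+\log^+\tfrac{1}{R-r}\big)$.

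\textbf{Step 2: the terms $\bar a/(R^2-\bar a z)$.} Each has modulus at most $1/(R-r)$. There are $n(R,f)+n(R,1/f)$ of them, and
\begin{equation*}
n(R)\le\frac{N(R',f)+N(R',1/f)}{\log(R'/R)}\quad\text{for any } R'>R,
\end{equation*}
so their contribution is again $O\big(\log^+T(R',f)+\log^+\tfrac{1}{R'-R}+\log^+R\big)$.

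\textbf{Step 3: the terms $1/(z-a)$.} This is the main obstacle, because these terms are unbounded near the zeros and poles, so a pointwise bound is impossible. I would integrate over $\theta$ instead of estimating pointwise. Using concavity of $\log^+$ (Jensen's inequality on the normalized sum), one reduces to the elementary uniform bound
\begin{equation*}
\frac{1}{2\pi}\int_0^{2\pi}\log^+\frac{1}{|re^{i\theta}-a|}\,d\theta\le \log^+\frac1r+O(1).
\end{equation*}
Combined with the count of $n(R)$ from Step 2, this yields the Gol'dberg--Grinshtein type estimate
\begin{equation*}
m\Big(r,\frac{f'}{f}\Big)\le C\Big(\log^+T(R',f)+\log^+R'+\log^+\frac{1}{R'-r}+\log^+\frac1r\Big)+O(1),
\end{equation*}
valid for $r<R<R'$, with $C$ an absolute constant.

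\textbf{Step 4: choosing $R'$.} For the general statement I would take $R'=r+1/T(r,f)$ and $R=(r+R')/2$. Borel's growth lemma gives $T\big(r+1/T(r,f),f\big)\le 2T(r,f)$ outside a set of finite linear measure. Hence $m(r,f'/f)=O\big(\log T(r,f)+\log r\big)$ off that set. Since $f$ is transcendental, $\log r=o(T(r,f))$, and so the bound is $S(r,f)$. If $f$ has finite order $\rho$, I would instead take $R'=2r$, which needs no exceptional set. Then $T(2r,f)=O(r^{\rho+1})$, and every term is $O(\log r)$.

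\textbf{Step 5: higher derivatives.} For $k\ge2$, write
\begin{equation*}
\frac{f^{(k)}}{f}=\prod_{j=0}^{k-1}\frac{f^{(j+1)}}{f^{(j)}},
\end{equation*}
so that $m(r,f^{(k)}/f)\le\sum_{j=0}^{k-1} m\big(r,f^{(j+1)}/f^{(j)}\big)$. Apply the $k=1$ case to each transcendental $f^{(j)}$. The relation $T(r,f^{(j)})\le (j+1)T(r,f)+S(r,f)$, proved by induction from $T(r,f')\le 2T(r,f)+m(r,f'/f)$, converts $S(r,f^{(j)})$ into $S(r,f)$. In the finite-order case it shows that each $f^{(j)}$ again has finite order, so every summand is $O(\log r)$.
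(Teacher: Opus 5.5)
The paper does not prove this lemma; it quotes it from Laine's book. Your proposal therefore has to stand on its own. Steps 1, 2, 4 and 5 are the standard argument and are fine. Step 3, which you correctly single out as the crux, is not actually carried out, and the mechanism you indicate does not work.

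\textbf{Why Step 3 fails.} First, $\log^+$ is not concave. For its concave majorant $\log(1+t)$, Jensen on the normalized sum gives $\log\bigl(1+\frac1N\sum x_j\bigr)\ge\frac1N\sum\log(1+x_j)$, which is a lower bound. The upper bound your reduction needs, $\log^+\bigl(\frac1N\sum x_j\bigr)\le\frac1N\sum\log^+x_j+O(1)$, is false: take $x_1=M\to\infty$ and $x_j=0$ for $j\ge2$.

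Second, the inequality $\log^+\sum x_j\le\sum\log^+x_j+\log N$ from your opening paragraph loses a factor $N$. It gives $N\bigl(\log^+\frac1r+O(1)\bigr)$ with $N=n(R,f)+n(R,1/f)$. Via Step 2, this is only bounded by $O\bigl(T(R',f)R'/(R'-R)\bigr)$ rather than by its logarithm, so Step 4 collapses.

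More fundamentally, no argument that uses only the single-term bound $\frac1{2\pi}\int_0^{2\pi}\log^+|re^{i\theta}-a|^{-1}\,d\theta\le\log^+\frac1r+O(1)$ can work. Let $E_1,\dots,E_N$ be disjoint arcs of length $2\pi/N$ and $x_j=e^{NA}\chi_{E_j}$. Each $x_j$ has $\log^+$-mean $A$, yet $\sum x_j$ has $\log^+$-mean $NA$.

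\textbf{How to repair it.} You need a single-term estimate with more integrability than $\log^+$. Fix $0<\alpha<1$. Use $(\sum x_j)^\alpha\le\sum x_j^\alpha$ and $\log^+y\le\frac1\alpha\log(1+y^\alpha)$. Then apply Jensen for the concave function $\log(1+t)$ to the $\theta$-average, not to the sum:
\[
\frac1{2\pi}\int_0^{2\pi}\log^+\sum_{j=1}^N\frac{1}{|re^{i\theta}-c_j|}\,d\theta\le\frac1\alpha\log\Bigl(1+\sum_{j=1}^N\frac1{2\pi}\int_0^{2\pi}\frac{d\theta}{|re^{i\theta}-c_j|^{\alpha}}\Bigr).
\]
Since $|re^{i\theta}-c|\ge r|\sin(\theta-\arg c)|$, each inner integral is at most $C_\alpha r^{-\alpha}$, uniformly in $c$. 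The left side is therefore at most $\frac1\alpha\log^+N+\log^+\frac1r+O(1)$. Combined with your Step 2 bound on $N$, this gives the Gol'dberg--Grinshtein-type estimate, and Steps 4 and 5 then go through.

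The restriction $\alpha<1$ is essential: $\int_0^{2\pi}|re^{i\theta}-c|^{-1}\,d\theta=\infty$ when $|c|=r$, so Jensen with the first power is unavailable.

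A minor point on Step 5: iterating $T(r,g')\le 2T(r,g)+m(r,g'/g)$ gives the constant $2^j$, not $j+1$. This is harmless, since any constant suffices.
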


The next lemma gives a sharp asymptotic estimate between $T(r, f(z + c))$ and $T(r, f)$, for meromorphic functions of finite order.
	\begin{lemma}\label{imple3}\cite{cf}
		Suppose $f(z)$ is a meromorphic function of finite order $\rho$ and $c$ is a non-zero complex constant. Then for every $\epsilon>0$,
		\begin{equation*}
			T(r,f(z+c))= T(r,f)+O(r^{\rho-1+\epsilon})+O(\log r).
		\end{equation*}
	\end{lemma}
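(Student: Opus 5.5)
The plan is to compare the two pieces of the characteristic separately, $T(r,f(z+c))=m(r,f(z+c))+N(r,f(z+c))$, and prove the one-sided bound $T(r,f(z+c))\le T(r,f)+O(r^{\rho-1+\epsilon})+O(\log r)$. The reverse inequality then follows by symmetry. Indeed, $g(z)=f(z+c)$ has the same order $\rho$, and applying the one-sided bound to $g$ with shift $-c$ gives $T(r,f)\le T(r,g)+O(r^{\rho-1+\epsilon})+O(\log r)$.

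\emph{Counting function.} A pole of $f(z+c)$ in $|z|\le r$ is a pole of $f$ in $|w|\le r+|c|$, so $n(r,f(z+c))\le n(r+|c|,f)$. Integrating, and absorbing the contribution of a possible pole at the origin into $O(\log r)$, gives $N(r,f(z+c))\le N(r+|c|,f)+O(\log r)$. Next,
\begin{equation*}
N(r+|c|,f)-N(r,f)=\int_r^{r+|c|}\frac{n(t,f)}{t}\,dt\le \frac{|c|\,n(r+|c|,f)}{r}.
\end{equation*}
The standard inequality $n(s,f)\log 2\le N(2s,f)-N(s,f)\le T(2s,f)$, together with finite order, gives $n(s,f)=O(s^{\rho+\epsilon})$. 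Hence $N(r,f(z+c))\le N(r,f)+O(r^{\rho-1+\epsilon})+O(\log r)$.

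\emph{Proximity function.} We write $m(r,f(z+c))\le m(r,f)+m\left(r,\frac{f(z+c)}{f(z)}\right)$. It therefore suffices to prove the difference analogue of Lemma \ref{imple2}:
\begin{equation*}
m\left(r,\frac{f(z+c)}{f(z)}\right)=O(r^{\rho-1+\epsilon}).
\end{equation*}
This is the main obstacle. I would apply the Poisson--Jensen formula to $\log|f|$ on a disc of radius $R=2r$ (say), evaluated at both $z$ and $z+c$, and subtract the two expressions.

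The difference then splits into three pieces:
\begin{enumerate}[(i)]
\item a Poisson-kernel term, whose kernel difference is $O(|c|/R)$ times $\int\bigl|\log|f(Re^{i\phi})|\bigr|\,d\phi$, hence $O\bigl(T(R,f)/r\bigr)=O(r^{\rho-1+\epsilon})$;
\item a sum over zeros $a_\mu$ and poles $b_\nu$ of $f$ in $|w|<R$ of the terms $\log\left|\frac{z+c-a_\mu}{z-a_\mu}\right|$ and the corresponding Blaschke-factor differences;
\item the matching sum over poles.
\end{enumerate}
For the sums, I would average over $z=re^{i\theta}$ and use the elementary estimate $\int_0^{2\pi}\log^+\left|1+\frac{c}{re^{i\theta}-a}\right|d\theta=O\bigl(|c|/r\bigr)$ uniformly in $a$, up to an integrable logarithmic singularity whose integral is controlled by Jensen-type bounds. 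Summing over the $n(R,f)+n(R,1/f)=O(r^{\rho+\epsilon})$ points gives $O(r^{\rho-1+\epsilon})$. The only delicate bookkeeping is handling points $a$ very close to the circle $|z|=r$; averaging in $\theta$ handles this, so no exceptional set is needed.

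Adding the two estimates gives the one-sided bound. Combined with the symmetric argument, this yields $T(r,f(z+c))=T(r,f)+O(r^{\rho-1+\epsilon})+O(\log r)$.
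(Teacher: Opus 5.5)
The paper does not prove this lemma; it quotes it from Chiang--Feng \cite{cf}. Your outline is essentially the route taken there: $T=m+N$, the pole comparison $n(r,f(z+c))\le n(r+|c|,f)$, the reduction $m(r,f(z+c))\le m(r,f)+m(r,f(z+c)/f(z))$ handled by Poisson--Jensen on $|w|<2r$, and symmetry under $c\mapsto -c$. The step you call the main obstacle is exactly Lemma~\ref{imple} of the paper with $c_1=c$, $c_2=0$, so here it could simply be quoted. The symmetry step, the Poisson-kernel term and the Blaschke-denominator terms are fine. However, two of your intermediate claims are false as written. Both can be repaired within the error terms the statement allows.

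\emph{The counting-function step.} Integrating $n(t,f(z+c))\le n(t+|c|,f)$ against $dt/t$ does not give $N(r,f(z+c))\le N(r+|c|,f)+O(\log r)$. After substituting $s=t+|c|$, an extra term $|c|\int^{r+|c|}n(s,f)\,\frac{ds}{s(s-|c|)}$ remains, and it can be of size $r^{\rho-1}$. For example, if $f$ has simple poles exactly at $\sqrt{k}$, $k\ge1$ (so $\rho=2$) and $c=1$, then $n(t,f(z+1))=n(t+1,f)$ and $N(r,f(z+1))-N(r+1,f)=r+O(\log r)$. Since $n(s,f)=O(s^{\rho+\epsilon})$, this term is $O(r^{\rho-1+\epsilon})+O(\log r)$, so your final bound for $N$ still holds.

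\emph{The uniform integral estimate.} $\int_0^{2\pi}\log^+|1+c/(re^{i\theta}-a)|\,d\theta$ is not $O(|c|/r)$ uniformly in $a$: for $|a|=r$ it is in general of exact order $r^{-1}\log r$. What does hold uniformly in $a\in\mathbb{C}$, from $|re^{i\theta}-a|\ge r|\sin(\theta-\arg a)|$, is
\begin{equation*}
\int_0^{2\pi}\log\Bigl(1+\frac{|c|}{|re^{i\theta}-a|}\Bigr)\,d\theta=O\Bigl(\frac{\log r}{r}\Bigr).
\end{equation*}
You must apply this both at the zeros $a_\mu$ and at the shifted poles $b_\nu-c$. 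For a pole, the positive part to control is $\log^+|(z-b_\nu)/(z+c-b_\nu)|\le\log(1+|c|/|z+c-b_\nu|)$, not $\log^+|1+c/(z-b_\nu)|$. Summing over $O(r^{\rho+\epsilon})$ points gives $O(r^{\rho-1+\epsilon}\log r)=O(r^{\rho-1+2\epsilon})$, and renaming $\epsilon$ finishes the argument. This logarithm is exactly what the $\epsilon$ in the statement pays for.

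Alternatively, take $\alpha=1-\epsilon$ and use
\begin{equation*}
|\log|1+w||\le\alpha^{-1}\bigl(|w|^{\alpha}+|w/(1+w)|^{\alpha}\bigr),
\end{equation*}
together with the uniform bound $\int_0^{2\pi}|re^{i\theta}-a|^{-\alpha}\,d\theta=O(r^{-\alpha})$. This handles zeros and poles at once, with the same $\epsilon$-loss.
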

Next lemma plays an important part in the study of complex differential-difference
equations.
\begin{lemma}\label{imple 4}\cite{yl}
Let $f(z)$ be  a transcendental meromorphic function, and $P(z,f)$, $Q(z,f)$ be two differential-difference polynomials of $f(z)$. If 
\begin{equation*}
	f^n(z) P(z,f)=Q(z,f)
\end{equation*}
holds, and if the total degree of $Q(z,f)$ in  $f(z)$ and its derivatives and their shifts is at most $n$, then
\begin{equation*} 
	m(r,P(z.f))=S(r,f),
	\end{equation*}
for all $r$ outside of a possible exceptional set of finite logarithmic measure.
\end{lemma}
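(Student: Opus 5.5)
The plan is to adapt the classical Clunie argument, with the lemma on the logarithmic derivative (Lemma \ref{imple2}) supplemented by its difference analogue. That analogue is the Halburd--Korhonen--Tohge estimate $m\left(r,\frac{f(z+c)}{f(z)}\right)=S(r,f)$, valid outside a set of finite logarithmic measure. It requires $\rho_2(f)<1$, so I will assume this growth restriction tacitly, as it holds in all applications in this paper (Theorem \ref{theoremA} assumes it, and the later theorems concern finite order solutions). This is also where the exceptional set of finite logarithmic measure in the conclusion comes from.

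First I write every monomial of $P$ and $Q$ in normalized form. A typical monomial of total degree $d$ is
$$M(z)=a(z)\prod_{j}\bigl(f^{(k_j)}(z+c_j)\bigr)^{\nu_j},\qquad \sum_j \nu_j=d,$$
where the coefficient $a$ is a small function of $f$. I rewrite it as
$$M(z)=a(z)\,f(z)^{d}\prod_j\left(\frac{f^{(k_j)}(z+c_j)}{f(z)}\right)^{\nu_j}.$$
For each factor I use
$$\frac{f^{(k)}(z+c)}{f(z)}=\frac{f^{(k)}(z+c)}{f(z+c)}\cdot\frac{f(z+c)}{f(z)}.$$
Lemma \ref{imple2} applied to $f(z+c)$, together with Lemma \ref{imple3} or its hyper-order version giving $T(r,f(z+c))\sim T(r,f)$ up to $S(r,f)$, handles the first factor. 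The difference lemma handles the second. Hence each quantity $m\left(r,\frac{f^{(k)}(z+c)}{f(z)}\right)$ is $S(r,f)$ outside a set of finite logarithmic measure.

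Next I split the circle $|z|=r$ into $E_1=\{|f(z)|\le 1\}$ and $E_2=\{|f(z)|>1\}$.

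On $E_1$, every monomial of $P$ has degree $d\geq 0$, so $|f|^{d}\le 1$ and
$$|P(z,f)|\le \sum |a(z)|\prod_j\left|\frac{f^{(k_j)}(z+c_j)}{f(z)}\right|^{\nu_j}.$$

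On $E_2$, I use $P=Q/f^n$. Each monomial of $Q$ has degree $d\le n$, so $|f|^{d-n}\le 1$ and
$$|P(z,f)|\le \sum |b(z)|\prod_j\left|\frac{f^{(k_j)}(z+c_j)}{f(z)}\right|^{\nu_j},$$
where the $b$ are the coefficients of $Q$.

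In both cases $\log^+|P|$ is bounded by a finite sum of terms $\log^+|a|$ or $\log^+|b|$, plus terms $\nu_j\log^+\bigl|f^{(k_j)}(z+c_j)/f(z)\bigr|$, plus $O(1)$. I then integrate over $E_1$ and $E_2$ separately and add. The coefficient contributions are $\le T(r,a)=S(r,f)$, and the ratio contributions are $S(r,f)$ by the estimate of the previous paragraph. This gives $m(r,P(z,f))=S(r,f)$.

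The bookkeeping is routine. The main obstacle is the difference ratio estimate $m(r,f(z+c)/f(z))=S(r,f)$ and the control of its exceptional set: this is exactly where the growth hypothesis $\rho_2(f)<1$ is needed. For finite order $f$ I would instead cite the Chiang--Feng version, which gives the bound $O(r^{\rho-1+\epsilon})$ with no exceptional set.
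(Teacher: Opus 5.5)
The paper gives no proof of this lemma; it is quoted from the cited source. Your argument is the standard Clunie argument behind that source, and it is correct. You normalize each monomial as $a\,f^{d}\prod_j\bigl(f^{(k_j)}(z+c_j)/f\bigr)^{\nu_j}$, bound $P$ directly on $\{|f|\le 1\}$, and bound it through $P=Q/f^{n}$ on $\{|f|>1\}$.

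Your added growth assumption is genuinely necessary, and the lemma as printed, with no growth condition, is false. Take $f(z)=e^{e^{z}}$ and $c=\log 2$. Then $f(z+c)=e^{2e^{z}}=f(z)^{2}$, so $f(z)\cdot f(z)=f(z+c)$ is an instance of the lemma with $n=1$, $P(z,f)=f(z)$ and $\deg Q=1$. Since $f$ is entire, $m(r,P)=m(r,f)=T(r,f)$, which is not $S(r,f)$. This $f$ has $\rho_2(f)=1$, so $\rho_2(f)<1$ cannot even be relaxed to $\rho_2(f)\le 1$.

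Your closing remark needs one correction. The Chiang--Feng bound $O(r^{\rho-1+\varepsilon})$ of Lemma~\ref{imple} does not by itself give $S(r,f)$. Suppose the lower order $\mu$ of $f$ satisfies $\mu<\rho-1$. Then there are intervals $[r_k^{\beta},r_k]$, with a fixed $0<\beta<1$, on which $T(r,f)<r^{\rho-1-\varepsilon}$. Their total logarithmic measure is infinite, so the Chiang--Feng bound is not $o(T(r,f))$ off any set of finite logarithmic measure.

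For finite order, use instead the Halburd--Korhonen estimate: for $\delta<1$, $m\bigl(r,f(z+c)/f(z)\bigr)=o\bigl(T(r,f)/r^{\delta}\bigr)$ outside a set of finite logarithmic measure. This is already covered by your main $\rho_2(f)<1$ route. The derivative factor has no such issue: $g=f(z+c)$ has finite order, so $m\bigl(r,g^{(k)}/g\bigr)=O(\log r)=S(r,f)$.
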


The next lemma formulates the difference version of the lemma on logarithmic derivative lemma for finite-order meromorphic functions.
	\begin{lemma}\label{imple}\cite{cf}
		Suppose $f(z)$ is a meromorphic function with $\rho(f)<\infty$ and $c_1,c_2 \in \mathbb{C}$ such that $c_1\neq c_2$, then for each $ \epsilon>0$, we have
		\begin{equation*}
			m\left(r,\frac{f(z+c_1)}{f(z+c_2)}\right)=O(r^{\rho-1+\epsilon}).
		\end{equation*}
	\end{lemma}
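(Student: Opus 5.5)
The plan is to follow the Poisson–Jensen approach of Chiang and Feng. First I reduce to a single shift, then I compare the Poisson–Jensen representations of $\log|f|$ at $z+\eta$ and at $z$.

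\textbf{Reduction.} Since $\log^+|ab|\le\log^+|a|+\log^+|b|$, we have
$$m\left(r,\frac{f(z+c_1)}{f(z+c_2)}\right)\le m\left(r,\frac{f(z+c_1)}{f(z)}\right)+m\left(r,\frac{f(z)}{f(z+c_2)}\right).$$
If some $c_j=0$, the corresponding term is $0$. So it suffices to prove that, for fixed $\eta\neq0$ and every $\epsilon>0$,
$$\frac{1}{2\pi}\int_0^{2\pi}\left|\log\left|\frac{f(re^{i\theta}+\eta)}{f(re^{i\theta})}\right|\right|d\theta=O(r^{\rho-1+\epsilon}).$$
This two-sided bound controls both $m(r,f(z+\eta)/f(z))$ and $m(r,f(z)/f(z+\eta))$. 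We may assume $f(0)\neq0,\infty$; otherwise we divide by a suitable power $z^m$, which changes the quotient by a rational factor costing only $O(1/r)$. We take $r$ large and fix $R=3r$.

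\textbf{Poisson--Jensen.} For $|w|<R$, let $a_\mu$ be the zeros and $b_\nu$ the poles of $f$ in $|a|<R$. Then
$$\log|f(w)|=\frac{1}{2\pi}\int_0^{2\pi}\log|f(Re^{i\varphi})|\,\mathrm{Re}\frac{Re^{i\varphi}+w}{Re^{i\varphi}-w}\,d\varphi+\sum_{|a_\mu|<R}\log\left|\frac{R(w-a_\mu)}{R^2-\bar a_\mu w}\right|-\sum_{|b_\nu|<R}\log\left|\frac{R(w-b_\nu)}{R^2-\bar b_\nu w}\right|.$$
We subtract the formula at $w=z$ from the formula at $w=z+\eta$, with $|z|=r$, and estimate three groups of terms.

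(i) \emph{Kernel term.} The difference of the Poisson kernels is bounded by $C|\eta|R/(R-r-|\eta|)^2=O(1/r)$. Also $\frac{1}{2\pi}\int_0^{2\pi}|\log|f(Re^{i\varphi})||\,d\varphi\le 2T(R,f)+O(1)$. So this contribution is $O(T(3r,f)/r)=O(r^{\rho-1+\epsilon})$.

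(ii) \emph{Denominator factors.} For each zero or pole, $\log|(R^2-\bar a(z+\eta))/(R^2-\bar a z)|=O(|\eta|/R)$. Summing over at most $n(R,0)+n(R,\infty)=O(r^{\rho+\epsilon})$ points gives $O(r^{\rho-1+\epsilon})$.

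(iii) \emph{Numerator factors.} These are the terms $\log|(z+\eta-a)/(z-a)|=\log|1+\eta/(z-a)|$. This is the main obstacle, because each such term is unbounded on the circle near $a$. For each fixed $a$, I split the $\theta$-integral into two parts:

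$\bullet$ On the set where $|z-a|\ge2|\eta|$, we have $|\log|1+u||\le C|u|$, so the contribution is at most
$$C|\eta|\int_0^{2\pi}\frac{d\theta}{|re^{i\theta}-a|}=O\!\left(\frac{\log r}{r}\right),$$
by the standard elementary estimate of this integral.

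$\bullet$ On the complementary set the $\theta$-measure is $O(|\eta|/r)$. There the logarithmic singularities of $\log|z+\eta-a|$ and $\log|z-a|$ are integrable. A direct computation, comparing an arc of length $\ell$ with a segment, bounds this part by $O\big((|\eta|/r)\log(r/|\eta|)\big)$.

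Thus each zero or pole in $|a|<R$ contributes $O(\log r/r)$, uniformly in $a$. Summing over the $O(r^{\rho+\epsilon/2})$ zeros and poles gives $O(r^{\rho-1+\epsilon})$.

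\textbf{Conclusion.} Combining (i), (ii) and (iii), and absorbing $\log r$ factors into $r^{\epsilon}$, gives the claimed bound. I expect the uniform per-point estimate in (iii) to need the most care, especially for zeros and poles lying very close to the circle $|z|=r$; the kernel and denominator terms are routine.
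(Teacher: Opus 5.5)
Your proof is correct, and it is essentially the argument of Chiang and Feng, which the paper quotes without proof. The common structure is Poisson--Jensen on a disc of radius comparable to $r$, with three groups of terms, summed over the $O(r^{\rho+\epsilon})$ zeros and poles:
\begin{itemize}
\item the kernel difference costs $O(|\eta|/r)\,(m(R,f)+m(R,1/f))$;
\item each conjugate Blaschke factor costs $O(|\eta|/r)$;
\item each numerator factor $\log|(z+\eta-a)/(z-a)|$ costs roughly $r^{-1}$ after averaging over $\theta$.
\end{itemize}
The only real difference is in (iii). Chiang and Feng avoid splitting the circle. For a fixed $0<\alpha<1$ they use the elementary inequality $\bigl|\log|z_1/z_2|\bigr|\le C_\alpha\bigl(|(z_1-z_2)/z_2|^\alpha+|(z_1-z_2)/z_1|^\alpha\bigr)$, together with $\frac{1}{2\pi}\int_0^{2\pi}|re^{i\theta}-a|^{-\alpha}\,d\theta=O\bigl((1-\alpha)^{-1}r^{-\alpha}\bigr)$. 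The latter holds uniformly in $a$ because the singularity is integrable for $\alpha<1$. This costs $r^{-\alpha}$ per point, and one then takes $\alpha=1-\epsilon$. Your near/far split instead gives the slightly sharper $O(r^{-1}\log r)$ per point.

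There is one slip to fix, in the first bullet of (iii): you may not enlarge the domain of integration to the whole circle. The integral $\int_0^{2\pi}|re^{i\theta}-a|^{-1}\,d\theta$ is infinite when $|a|=r$ and grows like $\frac{2}{r}\log\frac{r}{\bigl||a|-r\bigr|}$ as $|a|\to r$. Since zeros and poles can lie arbitrarily close to $|z|=r$, the full-circle integral is not uniformly $O(\log r/r)$.

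Keep the restriction $|re^{i\theta}-a|\ge 2|\eta|$ that you introduced, and the bound works:
\begin{itemize}
\item For $|a|\ge r/2$ one has $|re^{i\theta}-a|\ge\max\{2|\eta|,\,c\,r|\theta-\arg a|\}$, with $|\theta-\arg a|\le\pi$ taken mod $2\pi$. This gives $O(r^{-1}\log(r/|\eta|))$ uniformly in $a$.
\item For $|a|<r/2$ the integrand is below $2/r$.
\end{itemize}
The same lower bound also shows that your near part is in fact $O(1/r)$, better than you claimed. With this correction the argument is complete.
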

	Next lemma estimates the characteristic function of an exponential polynomial $f$. This lemma can be seen in \cite{whl}.
	\begin{lemma}\label{imple5}\cite{whl}
		Suppose $f$ is an entire function given by
		$$f(z)=B_{0}(z)+B_{1}(z)e^{w_{1}z^{t}}+B_{2}(z)e^{w_{2}z^{t}}+...+B_{m}(z)e^{w_{m}z^{t}},$$
		where $B_{i}(z);0\leq i\leq m$ denote either exponential polynomial of degree $<t$ or polynomial in $z$, $w_{i};1\leq i\leq m$ denote the constants and $t$ denotes a natural number. Then
		$$T(r,f)=C(Co(W_{0}))\frac{r^{t}}{2\pi}+o(r^{t}),$$
		Here $C(Co(W_{0}))$ is the perimeter of the convex hull of the set $W_{0}=\{0,\overline{w}_{1},\overline{w}_{2},...,\overline{w}_{m}\}$.
	\end{lemma}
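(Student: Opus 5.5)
The plan is to compute $T(r,f)=m(r,f)$ directly, ray by ray, from the asymptotic behaviour of the exponential terms; this is possible because $f$ is entire and $N(r,f)=0$. Write $z=re^{i\theta}$ and set $w_0=0$, $B_0$ as given. Then
$$|e^{w_j z^t}|=\exp\bigl(r^t\,\mathrm{Re}(w_j e^{it\theta})\bigr),$$
and $\mathrm{Re}(w_je^{it\theta})=\mathrm{Re}(\overline{w}_j\,e^{-it\theta})$. This is the inner product of the point $\overline{w}_j$ with the unit vector $e^{-it\theta}$. Define
$$h(\theta)=\max_{0\le j\le m}\mathrm{Re}(\overline{w}_j e^{-it\theta})=H_{W_0}(-t\theta),$$
where $H_{W_0}$ is the support function of $Co(W_0)$. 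Because $0\in W_0$, we have $h\ge 0$, so $h^+=h$.

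\textbf{Step 1 (coefficients are negligible).} Each $B_j$ is a polynomial or an exponential polynomial of degree $<t$, so it has order $\sigma<t$. This gives $\log|B_j(z)|\le r^{\sigma+\epsilon}=o(r^t)$ everywhere. For the lower bound, the standard minimum-modulus estimate (Cartan's lemma / Boutroux–Cartan) gives $\log|B_j(z)|\ge -r^{\sigma+\epsilon}$ outside a union of exceptional discs. The set of $r$ for which the circle $|z|=r$ meets those discs has finite logarithmic measure.

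\textbf{Step 2 (asymptotics on rays).} Fix $\delta>0$. The critical directions are those $\theta$ at which the maximum in $h(\theta)$ is attained by two distinct points $\overline{w}_j$ (merge equal $w_j$'s beforehand). There are finitely many of them. Outside $\delta$-neighbourhoods of these directions a single index $j(\theta)$ dominates by a margin $\eta(\delta)r^t$. Using Step 1 (off the exceptional $r$), this gives
$$\log|f(re^{i\theta})|=h(\theta)r^t+o(r^t)$$
uniformly in $\theta$ there. In the $\delta$-neighbourhoods we only use the trivial upper bound $\log^+|f|\le h(\theta)r^t+o(r^t)$, and these neighbourhoods contribute at most $O(\delta)r^t$ to $m(r,f)$. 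Integrating and letting $\delta\to0$ yields
$$m(r,f)=\frac{r^t}{2\pi}\int_0^{2\pi}h(\theta)\,d\theta+o(r^t)$$
for $r$ outside a set of finite logarithmic measure. Since $T(r,f)$ is increasing and the main term $r^t$ varies regularly, a standard argument removes the exceptional set.

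\textbf{Step 3 (evaluate the integral).} Substitute $\varphi=-t\theta$. As $\theta$ runs over $[0,2\pi]$, $\varphi$ covers the circle $t$ times with $d\theta=d\varphi/t$, so
$$\int_0^{2\pi}h\,d\theta=\int_0^{2\pi}H_{W_0}(\varphi)\,d\varphi.$$
By Cauchy's formula for planar convex sets, the integral of the support function equals the perimeter $C(Co(W_0))$. This gives the claim; the degenerate cases where the hull is a segment or a point are covered by the same formula.

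The main obstacle is Step 1 together with the critical rays in Step 2: near the zeros of the $B_j$, and near directions where two exponentials have equal modulus, cancellation could make $\log|f|$ much smaller than $h(\theta)r^t$. Only a lower bound is delicate there. I would handle it as described, by the Cartan minimum-modulus estimate for the lower-order coefficients and by shrinking sectors of total angle $O(\delta)$.
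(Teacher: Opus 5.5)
The paper does not prove this lemma. It is quoted from \cite{whl}, where it is credited to Steinmetz, so there is no internal argument to compare with. Your proposal is a correct, self-contained proof along the standard lines. Since $N(r,f)=0$, you compute $m(r,f)$ ray by ray and identify the limit of $r^{-t}\log^+|f(re^{i\theta})|$ with the support function of $Co(W_0)$ evaluated at $t\theta$. Cauchy's formula $\int_0^{2\pi}H_K=\mathrm{perimeter}(K)$ then finishes the proof, with a segment counted as twice its length. The other steps are also handled correctly:
\begin{enumerate}
\item the Cartan/Titchmarsh lower bound for the coefficients, which have order $\le t-1<t$;
\item the $\delta$-sectors around the finitely many critical directions, with a margin $\eta(\delta)>0$;
\item the removal of the exceptional set of radii using the monotonicity of $T(r,f)$. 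That set even has finite linear measure, since the radii $|a_n|^{-h}$, $h>\sigma$, are summable.
\end{enumerate}

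Three small points should be tidied.

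First, the statement tacitly assumes the normalized form: the $w_j$ are distinct and non-zero, and $B_j\not\equiv 0$ for $1\le j\le m$. Your Step 1 lower bound needs $B_j\not\equiv0$. Also, when you merge equal $w_j$'s, you must discard any point whose merged coefficient vanishes identically. Otherwise the formula is false: for $f=1+0\cdot e^{z}$ it would predict $T(r,f)\sim r/\pi$.

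Second, if $B_0\equiv 0$, then in directions where the index $0$ dominates, $\log|f|\le -\eta r^t+o(r^t)$ rather than $h(\theta)r^t+o(r^t)$. What you actually need, and get, is $\log^+|f(re^{i\theta})|=h(\theta)r^t+o(r^t)$. So Step 2 should be phrased with $\log^+$.

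Third, $\mathrm{Re}(\overline{w}_je^{-it\theta})$ is the inner product of $\overline{w}_j$ with $e^{it\theta}$, not with $e^{-it\theta}$. So in the usual convention $h(\theta)=H_{W_0}(t\theta)$. This sign slip is harmless, because your integral runs over whole turns of the circle.
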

	\section{\textbf{Proof of main theorems}}
	\begin{proof}[\textbf{\underline{Proof of Theorem 1:}}]
		Suppose equation \eqref{1} has a meromorphic solution satisfying the conditions of Theorem $1$. First, we aim to prove that no meromorphic solution  $f(z)$ satisfying $\rho_2(f)<1$   exists for equation \eqref{1}  when $n\geq5$.
		
	\textbf{Case1:} \textbf{ $f(z)$ has at  least one pole}.\\ Let $z_0$ be a pole of $f(z)$ with multiplicity $ k\geq1$. If $c=0$, then
\begin{equation*}
	f^n +\omega f^{n-1}f' +p(z)f(z)=p_1e^{\alpha_1 z}+p_2e^{\alpha_2 z}.
\end{equation*}
Comparing the multiplicity  of the  pole $z_0$ on both sides gives us a contradiction. 
\\ If  $c\neq0$, then equation \eqref{1} indicates that  $z_0+c$ is a  pole of $f(z)$ with multiplicity at least $nk+1$. Now, replacing $z_0+c$ with $z$ in equation \eqref{1}, we get 
\begin{equation*}
	f^n(z_0+c) +\omega f^{n-1}(z_0+c)f'(z_0+c) +p(z_0+c)f(z_0+c)=p_1e^{\alpha_1 (z_0+c)}+p_2e^{\alpha_2 (z_0+c)}.
\end{equation*}
  Hence,  $z_0+2c$ is also a pole of $f(z)$ with multiplicity at least $n^2k+n+1$. By iterating the above procedure, we conclude that for any arbitrary integer $i\ (i\geq1)$, the point $z_0+ic$ is a pole of $f(z)$ with multiplicity at least $n^ik+n^{i-1}+\cdots+1$.
   Accordingly, for each integer $m$, we have
   \begin{equation*}
   	n(m|c|+|z_0|+1,f)\geq k+\sum_{i=1}^{m} [ n^ik+n^{i-1}+\cdots+1 ].
   \end{equation*}
	Thus, 
	\begin{align*}
	\rho_2(f) \geq \lambda_2 \left( \frac{1}{f}\right) &=\limsup\limits_{r\to \infty}\frac{\log\log n(r,f)}{\log r}\\& \geq\limsup\limits_{m\to \infty}\frac{\log\log n(m|c|+|z_0|+1,f)}{\log m|c|+|z_0|+1}\\&\geq\limsup\limits_{m\to \infty} \frac{\log\log n^m}{m}\\&=1, 
	\end{align*}
which is a contradiction to the given condition $\rho_2<1$.\\
	 \textbf{Case2:} \textbf{ $f(z)$  has no pole i.e. $f(z)$ is an entire function.}\\
	 If $f(z)$ is a polynomial, then comparing growth of both sides, we get that order of growth of L.H.S is $0$, while R.H.S has $1$, which is contradictory. Hence, $f(z)$ is transcendental. 
	 Let $H(z)=p(z)f(z+c)$ and $Q(z)= f^n+\omega f^{n-1}f'$, then  equation \eqref{1} 	 
	 becomes
	 \begin{equation}\label{2}
	 	Q+H=p_1e^{\alpha_1z}+p_2e^{\alpha_2z}. 
 		 \end{equation}
	Differentiating  equation \eqref{2}, we get 
	\begin{equation}\label{3}
		 Q'+H'=p_1\alpha_1e^{\alpha_1z}+p_2\alpha_2e^{\alpha_2z}.
		\end{equation}
Now, eliminating $e^{\alpha_1z}$ from \eqref{2} and \eqref{3}
	 \begin{equation}\label{4}
	 	 \alpha_1Q-Q'+ \alpha_1H-H'=p_2(\alpha_1-\alpha_2)e^{\alpha_2z}.
	 \end{equation}
 Differentiating  equation \eqref{4}, we get
 \begin{equation}\label{5}
 	\alpha_1Q'-Q''+ \alpha_1 H'-H''=p_2(\alpha_1-\alpha_2)\alpha_2e^{\alpha_2z}.
 \end{equation}
 Use  equations \eqref{4} and \eqref{5} to eliminate $e^{\alpha_2z}$, 
\begin{equation}\label{6}
	\alpha_1\alpha_2Q-(\alpha_1+\alpha_2)Q'+Q''=-(\alpha_1\alpha_2H-(\alpha_1+\alpha_2)H'+H'').
\end{equation} 
 Note that,
 \begin{equation*}
 	Q'= nf^{n-1}f'+(n-1)\omega f^{n-2}(f')^2 +\omega f^{n-1}f'',
 \end{equation*}
\begin{align*}
 	Q''=n(n-1)f^{n-2}(f')^2+nf^{n-1}f''+(n-1)(n-2)\omega f^{n-3}f'^3\\  +2(n-1)\omega f^{n-2}f'f''+\omega f^{n-1}f'''+(n-1)\omega f^{n-2}f'f''.
 	\end{align*}
 Substituting in equation \eqref{6}, we get
 \begin{equation}\label{7}
 	f^{n-3}\phi=-(\alpha_1\alpha_2H-(\alpha_1+\alpha_2)H'+H''),
 \end{equation} 
 where
 \begin{align}\label{A}
	\phi=\alpha_1\alpha_2f^3+(\omega\alpha_1\alpha_2-n(\alpha_1+\alpha_2))f^2f'+(n-\omega(\alpha_1+\alpha_2))f^2f''\\ \nonumber+\omega f^2f'''+ (n-1)(n-\omega(\alpha_1+\alpha_2))ff'^2+3\omega(n-1)ff'f'''\\ \nonumber +\omega(n-1)(n-2)f'^3,
 \end{align}
is entire as $f(z)$ is an entire function. Thus, $T(r,\phi)= m(r,\phi)$.\\
\begin{enumerate}
 \item If $\phi \not\equiv  0$, 
 then from equation \eqref{7}, $n\geq5$ and using Lemma \ref{imple 4}, we get 
	\begin{align*}
	m(r,\phi)= S(r,f),\\ m(r,f \phi)=S(r,f).
	\end{align*}
Using these above equations and First Fundamental theorem of Nevanlinna, we get
\begin{align*}
	T(r,f)=m\left(r,\frac{f\phi}{\phi}\right)=m(r,f\phi)+m\left(r,\frac{1}{\phi}\right)=S(r,f),
\end{align*}
which is a contradiction. Therefore, $\phi \equiv 0$.
\item  If $\phi  \equiv 0$, then equation \eqref{7} implies 
$\alpha_1\alpha_2H-(\alpha_1+\alpha_2)H'+H''=0$, and solution will be of the form 
\begin{equation*}
H(z)=\alpha_1 e^{\alpha_1z}+\alpha_2e^{\alpha_2z}.
\end{equation*}
Using $H(z)=p(z)f(z+c)$, we have 
\begin{equation}\label{8}
	f(z)=\gamma_1 e^{\alpha_1z}+\gamma_2e^{\alpha_2z},
	\end{equation}
	where $\gamma_1=\frac{\alpha_1e^{-\alpha_1c}}{p(z-c)}$, and $\gamma_2=\frac{\alpha_2e^{-\alpha_2c}}{p(z-c)}$.\\
	
In the similar manner, with the help of  equation \eqref{6} i.e.,
\begin{equation*}
\alpha_1\alpha_2Q-(\alpha_1+\alpha_2)Q'+Q''=0,
\end{equation*}
we have solution of the form 
\begin{equation*}
	Q(z)=m_1 e^{\alpha_1z}+m_2e^{\alpha_2z},
\end{equation*}
where $m_1$ and $m_2$ are constants. Using the fact that  $Q(z)=f^n+\omega f^{n-1}f'$   and  $f(z)=\gamma_1 e^{\alpha_1z}+\gamma_2e^{\alpha_2z}$ , we have
\begin{align}\label{9}
(\gamma_1e^{\alpha_1z}+\gamma_2e^{\alpha_2z})^{n-1}[\gamma_1e^{\alpha_1z}+\gamma_2e^{\alpha_2z}+\omega & \nonumber (\alpha_1\gamma_1e^{\alpha_1z}+\alpha_2\gamma_2e^{\alpha_2z})]\\&=m_1e^{\alpha_1z}+m_2e^{\alpha_2z}.
\end{align}
Dividing equation \eqref{9}  by $e^{\alpha_2z}$ and solve to get
\begin{align*}
\\&	(\gamma_1e^{\alpha_1z}+\gamma_2e^{\alpha_2z})^{n-1}[\gamma_1(\omega \alpha_1+1)e^{\alpha_1z}+\gamma_2  (\omega \alpha_2+1)e^{\alpha_2z}]=m_1e^{(\alpha_1-\alpha_2)z}+m_2,\\&
\gamma_1(\omega \alpha_1+1) \left[\sum_{i=0}^{n-2} \ ^{n-1}C_{i} (\gamma_1)^{i} (\gamma_2)^{n-i-1} e^{(i+1)\alpha_1z+(n-i-2)\alpha_2z} +\gamma_1^{n-1}e^{n\alpha_1z-\alpha_2z}\right]\\& + \gamma_2(\omega \alpha_2+1) \left[\sum_{i=0}^{n-1} \ ^{n-1}C_{i} (\gamma_1)^{i} (\gamma_2)^{n-i-1} e^{i\alpha_1z+(n-i-1)\alpha_2z} \right]=m_1e^{(\alpha_1-\alpha_2)z}+m_2.
\end{align*}
\end{enumerate}
Since $\frac{\alpha_1}{\alpha_2}\neq n$, $\alpha_1\neq \alpha_2$, and $\frac{\alpha_2}{\alpha_1}\neq n$, applying Borel's lemma to the above equation yields
$\gamma_1^{n-1}=0$, i.e. $\gamma_1=0$, which leads to a contradiction.
Similarly, dividing \eqref{9} by $e^{\alpha_1z}$ gives $\gamma_2=0$, which is again a contradiction.
Therefore, in both cases, we arrive at a contradiction, which implies that equation \eqref{1} does not admit a meromorphic solution with $\rho_2(f) < 1$ when $n \geq 5$.\\
Now, we will prove the left part of the theorem i.e $\Bar{\lambda}(f)=\rho(f)$.
We can rewrite equation \eqref{7} for $n=4$ as,
\begin{equation}\label{10}
f\phi=-(\alpha_1\alpha_2H-(\alpha_1+\alpha_2)H'+H'').
\end{equation}
If $\phi\equiv 0$, then by proceeding in similar manner as in case $2$, we get a contradiction.\\
If   $\phi\not \equiv 0$, then applying Clunie's lemma \ref{imple 4} to equation \eqref{10} gives that 
$m(r,\phi)=S(r,f)$ and hence
\begin{equation}\label{S}
	T(r,\phi)=m(r,\phi)+N(r, \phi)= S(r,f).
\end{equation} 
Using the First Fundamental theorem of Nevanlinna, we also have 
\begin{equation}\label{12}
	T\left(r,\frac{1}{\phi}\right)=m\left(r,\frac{1}{\phi}\right)+N\left(r, \frac{1}{\phi}\right)= S(r,f).
\end{equation}
From equation \eqref{A}, we have 
\begin{align}\label{B}
	\phi=\alpha_1\alpha_2f^3+Af^2f'+Bf^2f''+\omega f^2f'''+ Cff'^2+3\omega(n-1)ff'f'''\\ \nonumber +\omega(n-1)(n-2)f'^3,
\end{align} 
where $A= \omega\alpha_1\alpha_2-n(\alpha_1+\alpha_2), 
B= n-\omega(\alpha_1+\alpha_2)$ and $  C= (n-1)(n-\omega(\alpha_1+\alpha_2)).$
Hence, equation \eqref{B} becomes 
\begin{align*}
	\frac{\phi}{f^3}=\alpha_1\alpha_2+A\frac{f}{f'}+B\frac{f''}{f}+\omega \frac{f'''}{f}+ C\frac{f'^2}{f^2}+3\omega(n-1)\frac{f'f'''}{f^2} +\omega(n-1)(n-2)\frac{f'^3}{f^3},
\end{align*} 
using Lemma \ref{imple2}, we have 
\begin{equation}\label{14}
 m\left(r,\frac{\phi}{f^3}\right)= S(r,f).
\end{equation}  
Next, with the help of  the lemma  of logarithmic derivatives  \ref{imple2} and above obtained expression, we have
\begin{align}\label{C}
 	3m\left(r,\frac{1}{f}\right) & \leq m\left(r,\frac{1}{\phi}\right)+3m\left(r,\frac{\phi}{f^3}\right)
 \\  \nonumber	& \leq m\left(r,\frac{1}{\phi}\right)+S(r,f).
 	\end{align}
  Furthermore, equation \eqref{A} implies  if $z_0$ is a multiple zero of $f$, then $z_0$ is also a zero of $\phi$. Therefore, it follows 
  \begin{equation}\label{15}
  N\left(r,\frac{1}{f}\right) \leq \bar{N}\left(r,\frac{1}{f}\right)+N\left(r, \frac{1}{\phi}\right)+S(r,f).
  \end{equation}
 With the help of the First Fundamental theorem and equations \eqref{12}, \eqref{C} and \eqref{15}, we have
\begin{align*}
		T(r,f) &= T\left(r,\frac{1}{f}\right)+ S(r,f) \\ & =m\left(r,\frac{1}{f}\right)+ N\left(r,\frac{1}{f}\right) +S(r,f) \\ & \leq \frac{1}{3}m\left(r,\frac{1}{\phi}\right)+ \bar{N}\left(r,\frac{1}{f}\right)+N\left(r,\frac{1}{\phi}\right) +S(r,f) \\ & \leq \bar{N}\left(r,\frac{1}{f}\right) +S(r,f),
	 \end{align*}
 which implies $\rho(f)\leq \Bar{\lambda}(f)$.
Hence, 
$\rho(f)= \Bar{\lambda}(f)$.
\end{proof}
\begin{proof}[\textbf{\underline{Proof of Theorem 2:}}]  We prove this theorem by contradiction. Let us assume that there exist $f(z)$, a meromorphic solution of equation \eqref{1}, and $\lambda(f)<\rho(f)$, then $ N(r,\frac{1}{f}) = S(r,f)$.
From Theorem 1, using equation \eqref{7} for $n=4$, we have 
\begin{equation*}
	f\phi=-(\alpha_1\alpha_2H-(\alpha_1+\alpha_2)H'+H'').
\end{equation*}
	Using Lemma \ref{imple 4} and equation \eqref{14} we have, 
	\begin{equation}\label{17}
		m(r,\phi)= S(r,f),     	m\left(r,\frac{\phi}{f^3}\right) = S(r,f).
	\end{equation}
	  Also $ N\left(r,\frac{1}{f}\right) = S(r,f)$, which implies
	\begin{equation}\label{18}
		 N\left(r,\frac{\phi}{f^3}\right) =  N(r,\phi)+N\left(r,\frac{1}{f^3}\right) =  S(r,f).
	\end{equation}
	If   $\phi\not\equiv 0$, then using equation \eqref{S} we have,
	\begin{align*}
		3T(r,f)=T(r,f^3) &= T\left(r,\frac{1}{f^3}\right)+O(1) \\&\leq T\left(r,\frac{\phi}{f^3}\right)+ T\left(r,\frac{1}{\phi}\right) +S(r,f) \\&\leq T(r,\phi) +S(r,f) \\& =S(r,f),
		\end{align*}
	 which is a contradiction.
	If $\phi \equiv 0$, then  we can proceed as in the previous theorem to arrive at the desired conclusion. Hence, no such solution exists for $n=4$. \\
Now, for $n=3$ equation \eqref{7} becomes 
	\begin{equation*}
	\phi=-(\alpha_1\alpha_2H-(\alpha_1+\alpha_2)H'+H'').
\end{equation*}
Above equation implies
\begin{equation*}
	m\left(r,\frac{\phi}{f}\right)= m\left(r,\frac{\alpha_1\alpha_2H-(\alpha_1+\alpha_2)H'+H''}{f}\right)=  S(r,f).
\end{equation*}
From equation \eqref{14}, we have 	$m\left(r,\frac{\phi}{f^3}\right) = S(r,f)$.
 For $\phi\not\equiv 0$, with the help of  $ N\left(r,\frac{1}{f}\right) = S(r,f)$ and above two equations, we have 
 	\begin{align*}
 	3T(r,f)=T(r,f^3) &= m\left(r,\frac{1}{f^3}\right)+ N\left(r,\frac{1}{f^3}\right) +O(1) \\&\leq m\left(r,\frac{\phi}{f^3}\right)+ m\left(r,\frac{1}{\phi}\right)+3 N\left(r,\frac{1}{f}\right) + O(1) \\&\leq T(r,\phi) +S(r,f) \\& = m(r,\phi)+S(r,f)  \\& \leq  m\left(r,\frac{\phi}{f}\right) + m(r,f) +S(r,f)\\& \leq T(r,f)+S(r,f),
 \end{align*}
and hence $2T(r,f)\leq S(r,f)$, which is a contradiction.
If $\phi \equiv0 $, then we can easily get a contradiction as in previous cases.
Hence, no such solution exist for this case also. 
Thus, theorem is proved.  
\end{proof}
		\begin{proof}[\textbf{\underline{Proof of Theorem 4:}}] 
		For $n\geq 4$, theorem  is proved. So, we will prove the result for $n=1,2,3$. Let $f(z)$ be a finite order meromorphic transcendental  solution with $\lambda(f)<\rho(f)$ and $\lambda(\frac{1}{f})<\rho(f)$. First, we will prove $\rho(f)=1$.\\
		\textbf{Case 1.} If $\rho(f)<1$, then using  the First Fundamental theorem of Nevanlinna and applying Lemmas \ref{imple2}, \ref{imple3},  and \ref{imple} to equation \eqref{z}, we get
		
	\begin{align*}
		T(r, e^{Q(z)}) &=T\left(r,\frac{p_1e^{\alpha_1z}+p_2e^{\alpha_2z}-f^nf'}{q(z)f(z+c)}\right) \\&\leq T(r,p_1e^{\alpha_1z}+p_2e^{\alpha_2z})+T\left(r,\frac{f^nf'f}{f}\right) +T\left(r,\frac{1}{q(z)f(z+c)}\right) \\& \leq T(r,p_1e^{\alpha_1z}+p_2e^{\alpha_2z})+(n+1)T(r,f) +T\left(r,\frac{1}{f}\right) +O(\log r) \\& \leq T(r,p_1e^{\alpha_1z}+p_2e^{\alpha_2z}) +(n+2)T(r,f) +S(r,p_1e^{\alpha_1z}+p_2e^{\alpha_2z}) \\& \leq  \max\{ \rho (p_1e^{\alpha_1z}+p_2e^{\alpha_2z}),\rho(f) \}.
	\end{align*}
This gives $\deg (Q(z))\leq1$, but our assumption is that $\deg (Q(z))\geq 1$. Therefore, $\deg (Q(z))=1$, say $Q(z)=lz+m$; $l\neq0$. So, equation \eqref{z} becomes
\begin{equation*}
	f^n(z)f'(z)+ q(z)f(z+c)e^{lz+m}=p_1e^{\alpha_1z}+p_2e^{\alpha_2z}.
\end{equation*}
Differentiating above equation, we get
\begin{equation*}
	f^nf''+ n f^{(n-1)}f'f' + \beta(z)e^{lz+m} = p_1 \alpha_1e^{\alpha_1z}+p_2\alpha_2e^{\alpha_2z},
\end{equation*}
where $\beta(z)=q'(z)f(z+c)+f'(z+c)q(z) +q(z)f(z+c)l$.\\

Eliminating $e^{\alpha_2}$ from above two equations, we obtain
\begin{equation}
f^nf''+n f^{(n-1)}(f')^2-\alpha_2f^nf'+( \beta(z)-\alpha_2q(z)f(z+c))e^{lz+m}= p_1 (\alpha_1-\alpha_2)e^{\alpha_1z}.
\end{equation}
\textbf{Case 1(i).} If $l \neq \alpha_1$, then using Borel's lemma \ref{imple1}, we have $p_1(\alpha_1-\alpha_2)=0$, implying $\alpha_1=\alpha_2$, which is a contradiction to the assumption.\\
\textbf{Case 1(ii).} If $l=\alpha_1$, then
\begin{equation*}
	f^nf''+n f^{(n-1)}(f')^2-\alpha_2f^nf'+( \beta(z)-\alpha_2q(z)f(z+c)e^m-p_1 (\alpha_1-\alpha_2))e^{\alpha_1z}=0.
\end{equation*}
Applying Borel's lemma \eqref{imple1}, we get $	f^nf''+n f^{(n-1)}(f')^2-\alpha_2f^nf'\equiv0$ or
\begin{equation}\label{23}
	n(f'^2)+ff''-\alpha_2ff' \equiv 0.
\end{equation}
Recall,
\begin{equation*}
	\frac{f''}{f}= \left(\frac{f'}{f}\right)'+  \left(\frac{f'}{f}\right)^2
\end{equation*}
 and use it in equation \eqref{23} to get a Bernoulli equation,
\begin{equation*}
	\alpha_2s-s'-(n+1)s^2=0,
\end{equation*} 
where $s=\frac{f'}{f}$.
	
Now, from routine calculations we have two solutions  $s_1=0$ and $s_2=\frac{\alpha_2}{n+1}$.
\textbf{Case1(ii)(a):}
 If $s\neq s_1$ and $s\neq s_2$, then 
 \begin{equation*}
 \frac{\alpha_2}{n+1}\left(\frac{s'}{s}-\frac{s'}{s-\frac{\alpha_2}{n+1}}\right)=1. 	
 \end{equation*}
	Integrating	w.r.t $z$, we have
	\begin{equation*}
		\log s- \log\left({s-\frac{\alpha_2}{n+1}}\right)=\frac{n+1 }{\alpha_2}z+c,
\end{equation*}
\begin{equation}
		\log \left(\frac{s}{{s-\frac{\alpha_2}{n+1}}}\right)=\frac{n+1 }{\alpha_2}z+c,
\end{equation}
	where $c$ is a constant.
	Above equation implies
	\begin{equation}\label{25}
		\left(\frac{s}{s-\frac{\alpha_2}{n+1}}\right)=  e^{A'},
	\end{equation}
where $A'= \frac{n+1}{\alpha_2}z+c$.		
 Solving equation\eqref{25}, we have 
 \begin{equation*}
 	s= \frac{f'}{f}=\frac{\alpha_2}{n+1}+\frac{\frac{\alpha_2}{n+1}}{e^{A'-1}}.
 \end{equation*}	
		Observe that zeroes of $e^{A'}-1$ are zeroes of $f$. Let $z_0$ be zero of $e^{A'}-1$  with multiplicity $m$, then 
		\begin{equation}
			Res\left(\frac{f'}{f},z_0\right)= Res \left(\frac{\alpha_2}{n+1}+\frac{\frac{\alpha_2}{n+1}}{e^{A'-1}}, z_0\right)= \frac{1}{n+1},
		\end{equation}
which is a contradiction.\\
	\textbf{Case 1(ii)(b):} If $s=s_1=0$, then $\frac{f'}{f} =0$, gives $f=0$, a contradiction.\\
		\textbf{Case 1(ii)(c):} If $s=s_2=\frac{\alpha_2}{n+1}$, then 
		\begin{equation*}
			\frac{f'}{f}=\frac{\alpha_2}{n+1}.
		\end{equation*}
	On solving, we get 
	\begin{equation*}
			f= e^{\frac{\alpha_2}{n+1}z+e}.
		\end{equation*}
Thus, $\rho(f)=1$ which is a contradiction to our assumption that $\rho(f)<1$.\\
		Hence, $\rho(f)\geq1$. Assume $\rho(f)>1$, and for simplicity define 
		 $$S(z) = p_1 e^{\alpha_1 z} + p_2 e^{\alpha_2 z} \qquad \text{and} \qquad	R(z) = q(z)f(z+c).$$
		  With these notations, equation \eqref{z}  can be rewritten as
		 \begin{equation*}
		 	f^{n}(z)f'(z) + R(z)e^{Q(z)} = S(z).
		 	\end{equation*}
		 	Differentiating the above  equation, we get 
		 	 		\begin{equation*}
		 	 			nf^{n-1}(f')^2+f^nf''+(	R'+Q'R)e^{Q(z)}=S'.
		 	 			\end{equation*}
		 Eliminating $e^{Q(z)}$ from above two equations,
		 \begin{equation}\label{27}
		 	f^{n-1}\psi =R(z)S'(z)-P(z)S(z),
		 	\end{equation}
		where
		\begin{equation*}
			 P(z)=R'(z)+Q'(z)R(z)
			\end{equation*}
		and
		\begin{equation}\label{28}
		\psi(z)= nR(f')^2+Rff''-(R'+Q'R)ff'.
		\end{equation}
	\begin{enumerate}[(i)]
		
		\item Let $n=3$, then applying analogue of Clunie lemma \ref{imple 4} to equation \eqref{27}, we have
		\begin{equation*}
			m(r,\psi)=S(r,f). 
			\end{equation*}
		Also, above equation \eqref{27} can be written as 
		\begin{equation}
			f(f\psi)=RS'-PS,
		\end{equation}
		and hence again applying Clunie lemma  \ref{imple 4}  we get 
		\begin{equation*}
		 m(r,f\psi)=S(r,f).
		\end{equation*}
			Now, with the help of obtained equations we get the following expression
			\begin{equation*}
				T(r,f)=T\left( r,\frac{f\psi}{\psi}\right)= T(r,f\psi)+T\left(r,\frac{1}{\psi}\right) =S(r,f),
			\end{equation*}
	which is a contradiction.	
		
	\item  Let $n=2$, then applying Lemma \ref{imple3} to equation \eqref{27}, we get
		\begin{equation}\label{29}
			m(r, \psi)=S(r,f).
		\end{equation}
		Also  $\lambda(f)<\rho(f)$  
	implies $$N\left(r,\frac{1}{f}\right)=S(r,f),$$
		hence
		\begin{equation}\label{30}
			N\left(r, \frac{\psi}{f^3}\right)=N\left( r, \frac{1}{f^3}\right) +N(r,\psi) =3N\left( r, \frac{1}{f}\right) =S(r,f).
		\end{equation}
		Applying Lemmas \ref{imple3} and \ref{imple} to equation \eqref{28}, we get
		\begin{equation}\label{eq3.7}
			m\left( r,\frac{\psi}{f^3}\right) =S(r,f).
		\end{equation}
		If $\psi\not\equiv 0$, then using equations \eqref{29}, \eqref{30} \& \eqref{eq3.7}, and the First Fundamental theorem of Nevanlinna, we get
		\begin{align*}
			3T(r,f)=T(r,f^3)&=T\left( r, \frac{1}{f^3}\right) +O(1)\\
			&\leq T\left( r, \frac{\psi}{f^3}\right) +T\left( r,\frac{1}{\psi}\right) +O(1) \\
			&=S(r,f),
		\end{align*}
		which is a contradiction.\\
		If $\psi\equiv 0$, then from equation \eqref{27}, we get
		$R(z)S'(z)-P(z)S(z)\equiv 0$.
		This gives
		\begin{align*} &R(z)S'(z)-(R'(z)+Q'(z)R(z))S(z)\equiv 0\\
		\text{i.e. }\ &\frac{S'(z)}{S(z)}-Q'(z)-\frac{R'(z)}{R(z)}=0\\
		\text{i.e. }\ &\frac{S'(z)}{S(z)}-Q'(z)-\frac{q'(z)}{q(z)}-\frac{f'(z+c)}{f(z+c)}=0.\end{align*}
		On integrating above equation, we get
		\begin{equation}\label{33}
			q(z)f(z+c)e^{Q(z)}=\frac{1}{C}S(z)=\frac{1}{C}(p_1e^{\alpha_{1}z}+p_1e^{\alpha_{2}z}),
		\end{equation} where $C$ is an integrating constant.
		As  $f$ is a finite order transcendental entire solution satisfying $\lambda(f)<\rho(f)$  via the Hadamard factorisation theorem $f$ must be of the form \begin{equation}\label{34}
			f(z)=g(z)e^{h(z)},
		\end{equation}
		 where $h(z)$ is a polynomial such that $\rho(f)=\deg (h)>1$ and $g(z)$ is the canonical product of zeros of $f(z)$ with $\lambda(f)=\rho(g)<\rho(f)$.\\
		Using equations \eqref{33} and \eqref{34} to the equation \eqref{z} we have 
			$$ [g^n(z)g'(z)+g^{n+1}(z)h'(z)]e^{(n+1)h(z)}=(1-\frac{1}{C})(p_{1}e^{\alpha_{1}z}+p_{2}e^{\alpha_{2}z}).$$
		On applying Lemma \ref{imple5} to the above equation, we get that the order of growth of the left side is greater than $1$, while the order of growth of the right side is exactly $1$. This is a  contradiction, hence $\rho(f)=1$.
		
	\item Let $n=1$, then applying Lemmas \ref{imple2} and \ref{imple} to equations \eqref{27} and \eqref{28} gives
		\begin{equation}\label{4.3}
			m\left(r,\frac{\psi}{f}\right)=m\left(r,\frac{R(z)S'(z)-(R'(z)+Q'(z)R(z))S(z)}{f}\right)=S(r,f)
		\end{equation}
		and 
		\begin{equation}\label{4.4}
			m\left( r,\frac{\psi}{f^{3}}\right) =S(r,f).
		\end{equation}
	Also, $\lambda(f)<\rho(f)$ gives
		\begin{equation}\label{4.5}
			N\left( r,\frac{1}{f}\right) =S(r,f).
		\end{equation}
		If $\psi\not\equiv 0$, then using the First Fundamental theorem of Nevanlinna and equations \eqref{4.3}, \eqref{4.4} \& \eqref{4.5}, we have
		\begin{align*}
			3T(r,f)=T(r,f^{3})&=m\left( r,\frac{1}{f^{3}}\right) +N\left( r,\frac{1}{f^{3}}\right) +O(1)\\
			&\leq m\left( r,\frac{\psi}{f^{3}}\right) +m\left( r,\frac{1}{\psi}\right) +3N\left( r,\frac{1}{f}\right) +O(1)\\
			&\leq T(r,\psi)+S(r,f)\\
			&= m(r,\psi)+S(r,f)\\
			&\leq m\left( r,\frac{\psi}{f}\right) +m(r,f)+S(r,f)\\
			&\leq T(r,f)+S(r,f).
		\end{align*}
		This gives $2T(r,f)=S(r,f)$, which is a contradiction.\\
		If $\psi\equiv 0$, then proceeding in the similar manner as done in (ii), we get the same contradiction. Thus, $\rho(f)=1$.\\
		Next, to prove $\deg(Q)=1$ and other conclusion, we follow  same techniques as done in \cite[Proof of Theorem 7]{jh}.
	\end{enumerate}
		
	\end{proof}
		
		\section*{\textbf{Acknowledgement}}
		The author is thankful  to her supervisor Professor Sanjay Pant for his valuable comments and suggestions to improve the readibility of the paper.

	\end{document}